\newtheorem{theorem}{Theorem}[section]
\newtheorem{proposition}[theorem]{Proposition}
\newtheorem{lemma}[theorem]{Lemma}
\newtheorem{corollary}[theorem]{Corollary}
\theoremstyle{definition}
\newtheorem{example}[theorem]{Example}
\def\img#1{\mathrm{Img}(#1)}
\begin{document}

\keywords{Hadamard product, $n$-ary quasigroup, Latin hypercube.}
\subjclass[2020]{Primary 20N05; Secondary 05B15.}

\title{The Hadamard multiary quasigroup product}

    \author[R.M. Falc\'on]{Ra\'ul M. Falc\'on}
\address{Department of Applied Mathematics I, Universidad de Sevilla,
41012 Seville, Spain\\
ORCID: 0000-0002-6474-7301}\email{rafalgan@us.es}

\author[L. Mella]{Lorenzo Mella}
\address{Dip. di Scienze Fisiche, Informatiche, Matematiche, Universit\`a degli Studi di Modena e\\ Reggio Emilia, Via Campi 213/A, I-41125 Modena, Italy\\
ORCID: 0000-0002-6243-4897}\email{lorenzo.mella@unipr.it}

\author[P. Vojt\v{e}chovsk\'{y}]{Petr Vojt\v{e}chovsk\'{y}}
\address{Department of Mathematics, University of Denver, 2390 S. York St., Denver,
CO 80208, USA\\
ORCID: 0000-0003-3085-6611}
\email{petr.vojtechovsky@du.edu}

\begin{abstract}
The Hadamard quasigroup product has recently been introduced as a natural generalization of the classical Hadamard product of matrices. It is defined as the superposition operator of three binary operations, one of them being a quasigroup operation. This paper delves into the fundamentals of this superposition operator by considering its more general version over multiary groupoids. Particularly, we show how  this operator preserves algebraic identities, multiary groupoid structures, inverse
elements, isotopes, conjugates and orthogonality. Then, we generalize the mentioned Hadamard quasigroup product to  multiary quasigroups. Based on this product, we prove that the number of $m$-ary quasigroups defined on a given set $X$ coincides with the number of $m$-ary operations that are orthogonal to a given $m$-set of orthogonal $m$-ary operations over $X$.
\end{abstract}
\maketitle

\section{Introduction}\label{sec:introduction}

Let $\Omega_m(X)$ denote the set of $m$-ary operations on a set $X$, with $m\geq 2$. An $m$-ary {\em groupoid} over $X$ is any pair $(X,f)$, with $f\in\Omega_m(X)$. This pair is an {\em $m$-ary monoid} if it has an {\em identity element} $e\in X$. That is, 
\[f(\underbrace{e,\ldots,e}_{i},a,\underbrace{e,\ldots,e}_{m-i-1})=a\]
for all $a\in X$ and every non-negative integer $i<m$. Furthermore, the $m$-ary groupoid $(X,f)$ is an {\em $m$-ary quasigroup} if the following statement holds for each positive integer $i\leq m$: For every $(a_1,\ldots,a_{i-1},a_{i+1},...,a_m)\in X^{m-1}$ and every $b\in X$, there exists a unique $a_i\in X$ such that $f(a_1,\dots,a_{i-1},a_i,a_{i+1},\dots,a_m)=b$. If this is the case, then the Cayley table of $f$ is a {\em Latin hypercube} of dimension $m$. That is, an $m$-dimensional array with entries in $X$ such that, when $m-1$ coordinates are fixed, then every element in $X$ appears exactly once. This is a {\em Latin square} if $m=2$, and a {\em Latin cube} if $m=3$. A {\em Latin transversal} in $(X,f)$ is any set of $|X|$ cells in its Cayley table, no two of which coincide in any coordinate or contain the same symbol. From here on, we denote by $\mathcal{Q}_m(X)\subset\Omega_m(X)$ the subset of $m$-ary operations on $X$ describing an $m$-ary quasigroup.

\vspace{0.1cm}

An $n$-ary operation is called a {\em superposition operator} if it results after applying a finite number of times some of the following procedures: permute, identify or add variables in an operation, or replace variables by operations. (See \cite[Part II, Chapter 1]{Lau2006} for a more comprehensive development of this concept.) This paper focuses on the superposition operator that is defined as follows. Let $f\in\Omega_m(X)$ and $g_1,\ldots,g_m\in\Omega_n(X)$. Then, we define the superposition operator $\odot_f(g_1,\ldots,g_m)\in\Omega_n(X)$ such that 

\begin{equation}\label{eq_odot}
\odot_f(g_1,\ldots,g_m)\left(\overline{a}\right):=f\left(g_1\left(\overline{a}\right),\ldots,g_m\left(\overline{a}\right)\right)
\end{equation}
for all $\overline{a}:=(a_1,\ldots,a_n)\in X^n$. Moreover, we define the operator
\begin{equation}\label{eq_operator}\begin{array}{cccc}
\odot: & \Omega_m(X) & \to & \Omega_m(\Omega_n(X))\\
& f & \mapsto &\begin{array}{cccc}
 \odot_f: & \left(\Omega_n(X)\right)^m & \to & \Omega_n(X)\\
& (g_1,\ldots,g_m) & \mapsto & \odot_f(g_1,\ldots,g_m).
\end{array}
\end{array}
\end{equation}

It is said that $f$ {\em preserves} a subset $S\subseteq\Omega_n(X)$ if $f(g_1,\ldots,g_m)\in S$, for all $g_1,\ldots,g_m\in S$. This notion plays a relevant role in the theory of clones in universal algebra \cite{Butkoke2008,Butkoke2008a,Denecke2019,Lau2006}. Furthermore, the superposition operator (\ref{eq_odot}) allows the construction of $m$-ary operations endowing $\Omega_n(X)$ with a semigroup structure. (That is, it gives rise to an associative groupoid.) Thus, for instance, if $m=n$ and $g_1=\ldots=g_m=g$ in (\ref{eq_odot}), then $\Omega_n(X)$ is endowed with semigroup structure with the superposition operator $f+g:=\odot_f(g,\ldots,g)$ (see \cite{Butkoke2008}). Moreover, if $m=n=2$ and we have three binary operations $\star$, $*$ and $\circ$ on $X$, then the superposition operator (\ref{eq_odot}) is defined so that
\begin{equation}\label{eq_odot2}
\odot_\star(*,\circ)(a,b):= \left(a*b\right)\star \left(a\circ b\right)
\end{equation}
for all $a,b\in X$.  The following two binary operations arise from this operator.
\begin{itemize}
    \item The pair $\left(\Omega_2(X),\rhd\right)$ is a monoid \cite{Mezera2015, Przytycki2011}, where 
    \[a*\rhd\star b:=(a*b)\star b\]
    for all $*,\star\in\Omega_2(X)$ and $a,b\in X$. (Note that $*\rhd\star=\odot_\star(*,\circ)$, where $a\circ b=b$, for all $a,b\in X$.) Its identity element is the left projection operation $a*b = a$. Based on this operation, every group $(X,\cdot)$ can be embedded into $\Omega_2(X)$ so that the image of every $a\in X$ in $\Omega_2(X)$ is a right distributive operation on $X$ (see \cite{Mezera2015}).
    
    \item The pair $\left(\Omega_2(X),\lhd\right)$ is a non-commutative monoid \cite{Lopez2022}, where 
    \[a*\lhd\star b:=(a*b)\star (b*a)\]
    for all $*,\star\in\Omega_2(X)$ and $a,b\in X$. (Note that $*\lhd\star=\odot_\star(*,\circ)$, where  $a\circ b=b*a$, for all $a,b\in X$.) Its identity element is again the left projection operation.
\end{itemize}

Furthermore, for each binary operation $\star\in\Omega_2(X)$, let $\mathrm{Mult}_\star(X)$ be the subset of binary operations $*\in \Omega_2(X)$ that are distributive over $\star$. That is, 
\[a*(b\star c) = (a*b)\star (a*c) \hspace{0.5cm} \text{ and } \hspace{0.5cm}  (a\star b)*c = (a*c)\star(b*c)\]
for all $a,b,c\in X$. Fuchs \cite[Chapter 18] {Fuchs1958} realized that, if $(X,\star)$ is an abelian group, then the superposition operator (\ref{eq_odot2}) makes the pair $\left(\mathrm{Mult}_\star(X),\odot_\star\right)$ to be also an abelian group. Its identity element $*_e\in\mathrm{Mult}_\star(X)$ is defined so that $a*_e b:=e$ for all $a,b\in X$, where $e$ is the unit element of the group $(X,\star)$. In addition, the inverse of a binary operation $*\in\mathrm{Mult}_\star(X)$ is the binary operation $*^{-1}\in\mathrm{Mult}_\star(X)$ that is defined so that $a*^{-1} b:=(a*b)^{-1}$ for all $a,b\in X$. Fuchs also proved that the group $\left(\mathrm{Mult}_\star(X),\odot_\star\right)$ is isomorphic to the group of homomorphisms from $X\otimes X$ to $X$, and also to the group of homomorphisms from $X$ to the endomorphism group of $X$. Clay \cite{Clay1968} proved a similar result for the set ${\mathrm{Mult}_\star}_L(X)$ of binary operations $*\in\Omega_2(X)$ that are left distributive with respect to $\star$. More precisely, he proved that, if $(X,\star)$ is an abelian group, then the pair $\left({\mathrm{Mult}_\star}_L(X),\,\odot_\star\right)$ is an abelian group isomorphic to the set of functions from $X$ to the endomorphism group of $X$.

Much more recently, the superposition operator (\ref{eq_odot2}) has been used in \cite{Falcon2023} to introduce the so-called {\em Hadamard quasigroup product} as a natural generalization of the classical Hadamard product of matrices. Note in this regard that, if $X$ is a finite set of $n$ elements, then the Cayley table of every binary operation $*\in\Omega_2(X)$ constitutes an array $A_*=(A_*[x,y])$  in the set $\mathcal{A}_n(X)$ of $n\times n$ arrays with entries in $X$, where $A_*[a,b]:=a*b$ for all $a,b\in X$. The superposition operator (\ref{eq_odot2}) constitutes a Hadamard quasigroup product whenever $\star\in\mathcal{Q}_2(X)$. That is, whenever $A_\star$ is a Latin square. If this is the case, then the Hadamard quasigroup product $\odot_\star$ of two matrices $A_*,\,A_\circ\in\mathcal{A}_n(X)$ is defined so that
\begin{equation}\label{eq_odot_H}
\left(A_* \odot_\star A_\circ\right)[a,b]:=A_\star\left[A_*[a,b],\,A_\circ[a,b]\right]
\end{equation}
for all $a,b\in X$. This can be seen as the Hadamard product of $A_*$ and $A_\circ$ with respect to the operation $\star$, rather than with respect to the usual multiplication of numbers. If $*=\circ=\star$, then the successive iteration of this product is endowed with a cyclic behaviour that allows the definition of new isomorphism invariants of quasigroups. The set of binary operations $\star\in\mathcal{Q}_2(X)$ preserving the set $\mathcal{Q}_2(X)$ requires the existence
of successive localized Latin transversals in $(X,\star)$ (see \cite{Falcon2023}).

In this paper, we generalize the Hadamard quasigroup product to multiary quasigroups and study its fundamentals. The paper is organized as follows. Section \ref{sec:preliminaries} deals with some preliminary concepts on multiary groupoids that are used throughout the paper. Then, we study in Section \ref{sec:odot} the operator $\odot$ described in (\ref{eq_operator}). Particularly, we show how this operator preserves algebraic identities, multiary groupoid structures, inverse elements, isotopes, conjugates and orthogonality. Finally, the mentioned Hadamard quasigroup product is generalized in Section \ref{sec:Hadamard} to multiary quasigroups. Based on this product, we prove that the number of $m$-ary quasigroups defined on a given set $X$ coincides with the number of $m$-ary operations that are orthogonal to a given $m$-set of orthogonal $m$-ary operations over $X$.

\section{Preliminaries} \label{sec:preliminaries}

In this section, we show some preliminary concepts on multiary groupoids that are used throughout the paper. For more details about this topic, we refer the reader to \cite{Belousov1966}.

Let $(X,f)$ be an $m$-ary groupoid. The {\em image} of $f$ is defined as the set 
\[\img{f}:=\left\{f(\overline{a})\colon\,\overline{a}\in X^m\right\}.\]
Further, for each non-negative integer $i<m$, we denote by 
\[I_i(X,f):=\left\{e\in X\colon\, f(\underbrace{e,\ldots,e}_i,a,\underbrace{e,\ldots,e}_{m-i-1})=a, \text{ for all } a\in X\right\}\]
the set of all {\em $i$-identity elements} of $(X,f)$. (If $m=2$, then $I_0(X)$ (respectively, $I_1(X)$) is the set of all {\em left} (respectively, {\em right}) {\em identity elements} of $(X,f)$.) In addition, we denote by
\[I(X,f):=\bigcap_{i=0}^{m-1} I_i(X,f)\] the set of all {\em identity elements} of $(X,f)$. The pair $(X,f)$ is an $m$-ary monoid if and only if  $I(X,f)\neq\emptyset$.

\vspace{0.15cm}

Let $(X,f)$ be an $m$-ary monoid. An element $a\in X$ is said to be {\em invertible} in $(X,f)$ if there exist an element $a^{-1}\in X$ and an identity element $e\in I(X,f)$ such that
\begin{equation}\label{eq_inverse}
f(\underbrace{a,\ldots,a}_i,a^{-1},\underbrace{a,\ldots,a}_{m-i-1})=e
\end{equation}
for every non-negative integer $i<m$. The element $a^{-1}$ is said to be an {\em inverse} of $a$ in $(X,f)$. We denote by $\mathrm{Inv}(a,f)$ the set of inverses of $a$ in $(X,f)$. Then, the pair $(X,f)$ is said to {\em have unique inverses} if $|\mathrm{Inv}(a,f)|=1$, for all $a\in X$. If this is the case, then $|I(X,f)|=1$.

\begin{example}\label{example_invertible}
Let $X=\{1,2,3\}$. We consider the ternary groupoid $(X,f)$ described by the following arrays in $\mathcal{A}_3(X)$
\[\begin{array}{ccccc}
A_1\equiv\begin{array}{|c|c|c|} \hline
 1 & 2 & 3\\ \hline
 2 & 1 & 1\\ \hline
 3 & 1 & 1\\ \hline
\end{array} & \hspace{1cm} &
A_2\equiv
\begin{array}{|c|c|c|} \hline
 2 & 1 & 2\\ \hline
 1 & 2 & 3\\ \hline
 2 & 3 & 2\\ \hline
\end{array} & \hspace{1cm} &
A_3\equiv
\begin{array}{|c|c|c|} \hline
 3 & 3 & 1\\ \hline
 3 & 3 & 3\\ \hline
 1 & 3 & 1\\ \hline
\end{array}
\end{array}\]
so that $f(i,j,k):=A_k[i,j]$, for all $i,j,k\in X$. In particular, $I_0(X,f)=I_1(X,f)=I_2(X,f)=\{1,2\}$. Thus, $I(X,f)=\mathrm{Inv}(1,f)=\mathrm{Inv}(2,f)=\{1,2\}$. Moreover, $\mathrm{Inv}(3,f)=\{1\}$. Hence, $(X,f)$ does not have unique inverses. 

Now, let $(X,g)$ be the ternary groupoid described by the following arrays in $\mathcal{A}_3(X)$
\[\begin{array}{ccccc}
B_1\equiv\begin{array}{|c|c|c|} \hline
 2 & 2 & 2\\ \hline
 2 & 1 & 1\\ \hline
 3 & 1 & 1\\ \hline
\end{array} & \hspace{1cm} &
B_2\equiv
\begin{array}{|c|c|c|} \hline
 1 & 1 & 2\\ \hline
 1 & 2 & 3\\ \hline
 2 & 3 & 2\\ \hline
\end{array} & \hspace{1cm} &
B_3\equiv
\begin{array}{|c|c|c|} \hline
 3 & 3 & 1\\ \hline
 3 & 3 & 3\\ \hline
 1 & 3 & 2\\ \hline
\end{array}
\end{array}\]
so that $g(i,j,k):=B_k[i,j]$, for all $i,j,k\in X$. In particular, $I_0(X,g)=I_1(X,g)=I_2(X,g)=\{2\}$. Thus, $I(X,g)=\{2\}$. Moreover, $\mathrm{Inv}(i,g)=\{i\}$, for $i\in\{1,3\}$. Hence, $(X,g)$ has unique inverses. \hfill $\lhd$
\end{example}

\vspace{0.25cm}

From now on, we denote the symmetric group on a set $X$ by $\mathcal{S}_X$. (By $\mathcal{S}_n$ if $X=[n]$ for some positive integer $n$.) Then, $(X,f)$ is said to be {\em isotopic} to an $m$-groupoid $(X,g)$ if there exist $m+1$ permutations $\pi_1,\ldots,\pi_{m+1}\in \mathcal{S}_X$ such that
\[g\left(\pi_1(a_1),\ldots,\pi_m(a_m)\right)=\pi_{m+1}\left(f(a_1,\ldots,a_m)\right)\]
for all $a_1,\ldots,a_m\in X$. The tuple  $(\pi_1,\ldots,\pi_{m+1})$ is called an {\em isotopism} from $(X,f)$ to $(X,g)$. This is an {\em isomorphism} if $\pi_1=\ldots=\pi_{m+1}$, in which case the $m$-groupoids $(X,f)$  and $(X,g)$ are said to be {\em isomorphic}. Furthermore, $(X,f)$ is said to be {\em conjugate} to $(X,g)$ if there exists a permutation $\pi\in \mathcal{S}_{m+1}$ such that 
\[g(a_{\pi(1)},\ldots,a_{\pi(m)})=a_{\pi(m+1)}\Leftrightarrow f(a_1,\ldots,a_m)=a_{m+1}\]
for all $a_1,\ldots,a_{m+1}\in X$. If this is the case, then $(X,g)$ is said to be the {\em $\pi$-conjugate} of $(X,f)$ and it is denoted $g=f^{\pi}$. This $\pi$-conjugate always exists when $\pi(m+1)=m+1$. Moreover, if $(X,f)$ is an $m$-ary quasigroup, then all its conjugates exist.

Finally, a set of {\em orthogonal} $m$-ary operations is any subset $\{g_1,\ldots,g_m\}\subseteq\Omega_m(X)$ such that the map 
\begin{equation}\label{eq:map_permutation2}\begin{array}{cccc}
& X^m & \to & X^m\\
& \overline{a} &\mapsto & \left(g_1\left(\overline{a}\right),\ldots,g_m\left(\overline{a}\right)\right)
\end{array}
\end{equation}
is a bijection. 

\section{The operator $\odot$}\label{sec:odot}

From here on, we assume that $f\in\Omega_m(X)$. Moreover, for each $c\in X$, we define the constant map
\begin{equation}\label{eq_constant}\begin{array}{cccc}
\gamma_c: & X^n & \to & X\\
& \overline{a} & \mapsto & 
\gamma_c(\overline{a}):=c
\end{array}
\end{equation}

\vspace{0.15cm}

\begin{lemma}\label{lemma_embedding} The map $c\mapsto \gamma_c$ is a homomorphism that embeds $(X,f)$ into $\left(\Omega_n(X),\,\odot_f\right)$.
\end{lemma}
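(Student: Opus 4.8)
The plan is to verify separately the two defining properties of an embedding: that $c\mapsto\gamma_c$ is a homomorphism of $m$-ary groupoids from $(X,f)$ to $\left(\Omega_n(X),\odot_f\right)$, and that it is injective. Write $\phi$ for this map, so $\phi(c)=\gamma_c$.

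First I would recall that, for $m$-ary groupoids, the homomorphism condition requires
$\phi\bigl(f(c_1,\ldots,c_m)\bigr)=\odot_f\bigl(\phi(c_1),\ldots,\phi(c_m)\bigr)$
for all $c_1,\ldots,c_m\in X$; that is, $\gamma_{f(c_1,\ldots,c_m)}=\odot_f(\gamma_{c_1},\ldots,\gamma_{c_m})$ as elements of $\Omega_n(X)$. To establish this identity of $n$-ary operations, I would evaluate both sides at an arbitrary $\overline{a}\in X^n$. The left-hand side returns the constant value $f(c_1,\ldots,c_m)$ by definition of $\gamma$. For the right-hand side, the definition (\ref{eq_odot}) of $\odot_f$ gives $f\bigl(\gamma_{c_1}(\overline{a}),\ldots,\gamma_{c_m}(\overline{a})\bigr)$, and since each $\gamma_{c_i}$ is constant with value $c_i$, this collapses to $f(c_1,\ldots,c_m)$ as well. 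As the two sides agree on every $\overline{a}$, the two functions coincide, which proves the homomorphism property.

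For injectivity, I would observe that if $\gamma_c=\gamma_{c'}$ in $\Omega_n(X)$, then evaluating at any fixed $\overline{a}\in X^n$ yields $c=\gamma_c(\overline{a})=\gamma_{c'}(\overline{a})=c'$, so $c=c'$. This completes the argument. I do not anticipate any genuine obstacle: the statement is a direct verification, and the only points demanding the slightest care are correctly unwinding the $m$-ary homomorphism condition together with the definition of $\odot_f$, and noting that the injectivity step tacitly uses $X^n\neq\emptyset$ (which holds whenever $X$ is nonempty). After these observations, both parts reduce to trivial evaluations of constant maps.
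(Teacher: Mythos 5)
Your proposal is correct and follows essentially the same route as the paper: both verify the homomorphism identity $\gamma_{f(c_1,\ldots,c_m)}=\odot_f(\gamma_{c_1},\ldots,\gamma_{c_m})$ by evaluating at an arbitrary $\overline{a}\in X^n$ and using that each $\gamma_{c_i}$ is constant. The only difference is that you also spell out the (trivial) injectivity check, which the paper leaves implicit; your remark about needing $X\neq\emptyset$ for that step is a fair, if minor, point of extra care.
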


\begin{proof} Let $c_1,\ldots,c_m\in X$ and $\overline{a}\in X^n$. We have from  (\ref{eq_odot})  that
\[\gamma_{f\left(c_1,\ldots,c_m\right)}(\overline{a})=f\left(\gamma_{c_1}\left(\overline{a}\right),\ldots,\gamma_{c_m}\left(\overline{a}\right)\right) = \odot_f(\gamma_{c_1},\ldots,\gamma_{c_m})(\overline{a}).\]
Hence, $\gamma_{f\left(c_1,\ldots,c_m\right)}=\odot_f(\gamma_{c_1},\ldots,\gamma_{c_m})$.
\end{proof}

\vspace{0.25cm}

The $m$-ary groupoid $\left(\Omega_n(X),\,\odot_f\right)$ inherits indeed the algebraic structure of $(X,f)$. More precisely, the embedding described in Lemma \ref{lemma_embedding} enables us to prove that the operator $\odot$ preserves algebraic identities. In this regard, let $\varphi(x_1,\dots,x_t)$ and $\psi(x_1,\dots,x_t)$ be two algebraic terms based on $m$-products of universally quantified variables. In both of them, if $m>2$, then we identify the $m$ variables associated to each $m$-product within a pair of curly brackets and separated by commas. Thus, for instance, we could consider for $(m,t)=(3,9)$ the terms 
\[\varphi(x_1,\dots,x_9)=\left\{\{x_1,x_2,x_3\},\{x_4,x_5,x_6\},\{x_7,x_8,x_9\}\right\}\]
and
\[\psi(x_1,\dots,x_9)=\left\{\{x_1,x_2,x_3\},\{x_4,\{x_5,x_6,x_7\},x_8\},x_9\right\}.\]
For $m=2$, these curly brackets are not necessary and the product of variables is simply written as a juxtaposition. Thus, for instance, we can consider the term $x_1x_2$ for $t=2$, or the term $(x_1x_2)x_3$ for $t=3$.

Further, from here on, we write the algebraic identity $\varphi(x_1,\dots,x_t)=\psi(x_1,\dots,x_t)$ simply as $\varphi=\psi$. The $m$-ary groupoid $(X,f)$ (respectively, $\left(\Omega_n(X),\,\odot_f\right)$) is said to satisfy the algebraic identity $\varphi=\psi$ if all the products in the equation refer to the operation $f$ (respectively, $\odot_f$), and $\varphi(a_1 ,\dots,a_t) = \psi(a_1 ,\dots,a_t)$ for all $a_1,\ldots,a_t\in X$ (respectively, 
$\varphi(g_1,\ldots,g_m) = \psi(g_1 ,\ldots,g_m)$ for all $g_1,\dots,g_m\in\Omega_n(X)$). Consider as an example commutativity for $m=2$, and suppose that $f$ is a binary operation $\star$. Then, the $m$-ary groupoids $(X,\star)$ and $(\Omega_n(X),\odot_\star)$ are commutative if and only if they satisfy the algebraic identity $x_1x_2=x_2x_1$. Equivalently, $a\star b=b\star a$ for all $a,b\in X$, and $\odot_\star(g_1,g_2)=\odot_\star(g_2,g_1)$ for every pair of $n$-ary operations $g_1,g_2\in\Omega_n(X)$. This last condition means that
\[g_1(\overline{a})\star g_2(\overline{a}) = g_2(\overline{a})\star g_1(\overline{a})\]
for all $\overline{a}\in X^n$. The following result shows how  the operator $\odot$ preserves algebraic identities.

\begin{lemma} \label{lemma_identities} The $m$-ary groupoid $(X,f)$ satisfies the algebraic identity $\varphi=\psi$ if and only if the $m$-ary groupoid $\left(\Omega_n(X),\,\odot_f\right)$ satisfies it.
\end{lemma}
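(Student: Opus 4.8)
The plan is to prove both directions by induction on the structure of the algebraic terms $\varphi$ and $\psi$, reducing everything to the key compatibility between the constant-map embedding of Lemma~\ref{lemma_embedding} and the evaluation of terms. The conceptual backbone is the observation that evaluating a term in $(\Omega_n(X),\odot_f)$ at some tuple $(g_1,\dots,g_t)$ and then applying the result to a fixed point $\overline{a}\in X^n$ is exactly the same as evaluating the \emph{same} term in $(X,f)$ at the tuple $(g_1(\overline{a}),\dots,g_t(\overline{a}))\in X^t$. In other words, for every term $\tau(x_1,\dots,x_t)$ and every $\overline{a}\in X^n$, one has
\[
\tau^{\odot_f}(g_1,\dots,g_t)(\overline{a}) = \tau^{f}\bigl(g_1(\overline{a}),\dots,g_t(\overline{a})\bigr),
\]
where the superscripts indicate which operation the products refer to.

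First I would establish this pointwise evaluation identity by structural induction on $\tau$. The base case, where $\tau$ is a single variable $x_j$, is immediate: both sides equal $g_j(\overline{a})$. For the inductive step, a term $\tau$ is an $m$-product $\{\tau_1,\dots,\tau_m\}$ of subterms; unwinding the definition~(\ref{eq_odot}) of $\odot_f$ gives
\[
\tau^{\odot_f}(g_1,\dots,g_t)(\overline{a}) = f\bigl(\tau_1^{\odot_f}(\dots)(\overline{a}),\dots,\tau_m^{\odot_f}(\dots)(\overline{a})\bigr),
\]
and applying the induction hypothesis to each $\tau_i$ turns the arguments into $\tau_i^{f}(g_1(\overline{a}),\dots,g_t(\overline{a}))$, which is precisely $\tau^{f}(g_1(\overline{a}),\dots,g_t(\overline{a}))$. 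This is the main technical step, and its only real obstacle is bookkeeping: keeping the nested $m$-ary products and their variable arguments straight across the two interpretations. Once this lemma is in hand, the theorem follows quickly.

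For the equivalence itself, suppose $(X,f)$ satisfies $\varphi=\psi$. Given arbitrary $g_1,\dots,g_t\in\Omega_n(X)$ and any $\overline{a}\in X^n$, the values $g_1(\overline{a}),\dots,g_t(\overline{a})$ are elements of $X$, so by hypothesis $\varphi^{f}(g_1(\overline{a}),\dots,g_t(\overline{a}))=\psi^{f}(g_1(\overline{a}),\dots,g_t(\overline{a}))$; the evaluation identity then gives $\varphi^{\odot_f}(g_1,\dots,g_t)(\overline{a})=\psi^{\odot_f}(g_1,\dots,g_t)(\overline{a})$, and since $\overline{a}$ was arbitrary the two $n$-ary operations agree, so $(\Omega_n(X),\odot_f)$ satisfies $\varphi=\psi$. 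For the converse I would use the embedding of Lemma~\ref{lemma_embedding}: assuming $(\Omega_n(X),\odot_f)$ satisfies the identity, take any $a_1,\dots,a_t\in X$ and specialize $g_j=\gamma_{a_j}$ to the constant maps. Since $\gamma_{a_j}(\overline{a})=a_j$ for every $\overline{a}$, the evaluation identity shows that $\varphi^{f}(a_1,\dots,a_t)=\psi^{f}(a_1,\dots,a_t)$, so $(X,f)$ satisfies $\varphi=\psi$. I expect the structural-induction lemma to be the crux, while both directions of the equivalence reduce to short substitution arguments.
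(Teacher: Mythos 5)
Your proposal is correct and follows essentially the same route as the paper's proof: the forward direction via pointwise evaluation of terms on an arbitrary $\overline{a}\in X^n$, and the converse via specialization to the constant maps $\gamma_{a_j}$ from Lemma \ref{lemma_embedding}. The only difference is that you make explicit, by structural induction on terms, the evaluation identity that the paper asserts directly from (\ref{eq_odot}); this adds rigor to the same argument rather than constituting a different one.
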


\begin{proof}
Let $g_1,\ldots,g_m\in \Omega_n(X)$ and $\overline{a}\in X^n$. If the $m$-ary groupoid $(X,f)$ satisfies the algebraic identity $\varphi=\psi$, then we have that 
\[\varphi\left(g_1\left(\overline{a}\right),\ldots,g_t\left(\overline{a}\right)\right)=\psi\left(g_1\left(\overline{a}\right),\ldots,g_t\left(\overline{a}\right)\right)\]
with respect to the product $f$. From (\ref{eq_odot}), we have that $\varphi\left(g_1,\ldots,g_t\right)=\psi\left(g_1,\ldots,g_t\right)$ with respect to the product $\odot_f$. Hence, the $m$-ary groupoid $\left(\Omega_n(X),\,\odot_f\right)$ also satisfies the algebraic identity $\varphi=\psi$.

Conversely, let $a_1,\dots,a_t\in X$. If the $m$-ary groupoid $\left(\Omega_n(X),\,\odot_f\right)$ satisfies the algebraic identity $\varphi=\psi$, then we have that 
\[\varphi\left(\gamma_{a_1},\ldots,\gamma_{a_t}\right) = \psi\left(\gamma_{a_1},\ldots,\gamma_{a_t}\right)\]
with respect to the product $\odot_f$. Thus, from (\ref{eq_odot}) and (\ref{eq_constant}), we have that $\varphi(a_1,\dots,a_t)=\psi(a_1,\dots,a_t)$. Hence, the $m$-ary groupoid $(X,f)$ also satisfies the algebraic identity  $\varphi=\psi$.
\end{proof}

\vspace{0.25cm}

The operator $\odot$ also preserves $m$-ary monoid structures. In this regard, the following lemma describes all the $i$-identity elements in the $m$-ary groupoid $\left(\Omega_n(X),\,\odot_f\right)$, for every non-negative integer $i<m$.

\begin{lemma}\label{Lm:LI} The following statements hold.
\begin{enumerate}
\item Let $c\in X$. Then, $c\in I_i(X,f)$ if and only if $\gamma_c\in I_i(\Omega_n(X),\odot_f)$.
\item For each non-negative integer $i<m$,
\[I_i(\Omega_n(X),\odot_f) = \{g\in\Omega_n(X):\img{g}\subseteq I_i(X,f)\}.\]
\end{enumerate}
\end{lemma}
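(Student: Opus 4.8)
The plan is to prove both statements by unwinding the definitions of the superposition operator and the $i$-identity sets, working coordinate by coordinate over $X^n$. The key observation throughout is that $\odot_f(g_1,\ldots,g_m)(\overline{a})=f(g_1(\overline{a}),\ldots,g_m(\overline{a}))$ depends only on the tuple of \emph{values} $(g_1(\overline{a}),\ldots,g_m(\overline{a}))$ at each fixed point $\overline{a}\in X^n$, so the condition that $g$ be an $i$-identity in $\left(\Omega_n(X),\,\odot_f\right)$ decouples into a pointwise condition on $\img{g}$.

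For statement (1), I would fix the non-negative integer $i<m$ and expand what it means for $\gamma_c$ to lie in $I_i(\Omega_n(X),\odot_f)$. By definition this says that for every $h\in\Omega_n(X)$, the superposition $\odot_f(\underbrace{\gamma_c,\ldots,\gamma_c}_{i},h,\underbrace{\gamma_c,\ldots,\gamma_c}_{m-i-1})$ equals $h$. Evaluating both sides at an arbitrary $\overline{a}\in X^n$ and using $\gamma_c(\overline{a})=c$ together with \eqref{eq_odot}, the left side becomes $f(\underbrace{c,\ldots,c}_{i},h(\overline{a}),\underbrace{c,\ldots,c}_{m-i-1})$. So the identity holds for all $h$ and all $\overline{a}$ precisely when $f(\underbrace{c,\ldots,c}_{i},b,\underbrace{c,\ldots,c}_{m-i-1})=b$ for all $b\in X$, since as $h$ ranges over $\Omega_n(X)$ and $\overline{a}$ over $X^n$ the value $h(\overline{a})$ ranges over all of $X$. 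That last condition is exactly $c\in I_i(X,f)$, giving the equivalence. This part is essentially a clean rewriting, and I do not expect any obstacle here; it is the multiary analogue of the embedding argument already used in Lemma \ref{lemma_embedding}.

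For statement (2), I would again fix $i$ and characterize membership of an arbitrary $g\in\Omega_n(X)$ in $I_i(\Omega_n(X),\odot_f)$. The condition is that $\odot_f(\underbrace{g,\ldots,g}_{i},h,\underbrace{g,\ldots,g}_{m-i-1})=h$ for every $h\in\Omega_n(X)$; evaluated at $\overline{a}\in X^n$ this reads $f(\underbrace{g(\overline{a}),\ldots,g(\overline{a})}_{i},h(\overline{a}),\underbrace{g(\overline{a}),\ldots,g(\overline{a})}_{m-i-1})=h(\overline{a})$. The forward direction is the subtle one and is where I expect the only real care to be needed: from the displayed pointwise equality I must deduce $\img{g}\subseteq I_i(X,f)$, i.e. that \emph{each} value $g(\overline{a})=:e$ is an $i$-identity of $(X,f)$, meaning $f(\underbrace{e,\ldots,e}_{i},b,\underbrace{e,\ldots,e}_{m-i-1})=b$ for \emph{all} $b\in X$, not merely for $b=h(\overline{a})$. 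The trick is to fix the target point $\overline{a}$ and let $h$ vary over constant maps $h=\gamma_b$; then $h(\overline{a})=b$ runs through all of $X$ while $g(\overline{a})=e$ stays fixed, forcing $f(\underbrace{e,\ldots,e}_{i},b,\underbrace{e,\ldots,e}_{m-i-1})=b$ for every $b\in X$ and hence $e\in I_i(X,f)$.

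Conversely, if $\img{g}\subseteq I_i(X,f)$, then for each $\overline{a}$ the value $e=g(\overline{a})$ satisfies $f(\underbrace{e,\ldots,e}_{i},b,\underbrace{e,\ldots,e}_{m-i-1})=b$ for all $b$, and taking $b=h(\overline{a})$ recovers the pointwise identity above for every $h$, so $g\in I_i(\Omega_n(X),\odot_f)$. This yields the set equality, and statement (1) then drops out as the special case $g=\gamma_c$, since $\img{\gamma_c}=\{c\}$. The whole argument rests on exploiting the freedom to choose $h$ (in particular constant maps) to separate the ``fixed slot'' value $g(\overline{a})$ from the ``free slot'' value, and the main point to get right is precisely this quantifier order, ensuring the identity is established for all $b\in X$ rather than only for the values $h$ happens to take.
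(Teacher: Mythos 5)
Your argument is correct and follows essentially the same route as the paper: both directions hinge on evaluating the superposition pointwise and specializing $h$ to the constant maps $\gamma_b$ so that $h(\overline{a})$ sweeps all of $X$ while $g(\overline{a})$ stays fixed, which is exactly the paper's proof. Your observation that statement (1) is the special case $g=\gamma_c$ of statement (2) is a minor (and valid) streamlining over the paper, which proves the two parts separately.
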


\begin{proof}
We prove separately each statement.
\begin{enumerate}
    \item If $c\in I_i(X,f)$, then
\begin{equation}\label{eq_LI_a}
f(\underbrace{c,\ldots,c}_i,a,\underbrace{c,\ldots,c}_{m-i-1})=a   
\end{equation} 
for all $a\in X$. As a consequence,
\[\odot_f(\underbrace{\gamma_c,\ldots,\gamma_c}_i,g,\underbrace{\gamma_c,\ldots,\gamma_c}_{m-i-1})(\overline{a})=g(\overline{a})\]
for all $g\in\Omega_n(X)$ and $\overline{a}\in X^n$. Hence, $\gamma_c\in I_i(\Omega_n(X),\odot_f)$.

\vspace{0.15cm}

Conversely, if $\gamma_c\in I_i(\Omega_n(X),\odot_f)$, then
\[\odot_f(\underbrace{\gamma_c,\ldots,\gamma_c}_i,g,\underbrace{\gamma_c,\ldots,\gamma_c}_{m-i-1})=g\]
for all $g\in\Omega_n(X)$. In particular, if $g=\gamma_a$, for some $a\in X$, then (\ref{eq_LI_a}) holds readily. Hence, $c\in I_i(X,f)$.

\vspace{0.25cm}

\item     
Let $g\in I_i(\Omega_n(X),\odot_f)$. It means that $g\in\Omega_n(X)$ satisfies that
\[\odot_f(\underbrace{g,\ldots,g}_i,h,\underbrace{g,\ldots,g}_{m-i-1})=h\]
for all $h\in\Omega_n(X)$. Particularly, if $h=\gamma_c$ for some $c\in X$, then we have that 
\begin{equation}\label{Eq:LI}
f(\underbrace{g(\overline{a}),\ldots,g(\overline{a})}_i,c,\underbrace{g(\overline{a}),\ldots,g(\overline{a})}_{m-i-1})=c
\end{equation}
for all $\overline{a}\in X^n$. Thus, $\img{g}\subseteq I_i(X,f)$.

\vspace{0.15cm}

Conversely, let $g\in\Omega_n(X)$ be such that $\img{g}\subseteq I_i(X,f)$. Then, (\ref{Eq:LI}) holds for all $c\in X$ and hence, 
\[\odot_f(\underbrace{g,\ldots,g}_i,h,\underbrace{g,\ldots,g}_{m-i-1})(\overline{a})=h(\overline{a})\]
for all $h\in \Omega_n(X)$ and $\overline{a}\in X^n$. Thus, $g\in I_i(\Omega_n(X),\odot_f)$.
\end{enumerate}
\hfill \,
\end{proof}

\vspace{0.25cm}

\begin{example}
Let $(X,\star)$ be defined by
\begin{displaymath}
    \begin{array}{c|ccc}
        \star&1&2&3\\
        \hline
        1&1&2&3\\
        2&1&2&3\\
        3&1&1&1
    \end{array}.
\end{displaymath}
Since $I_0(X,\star) = \{1,2\}$, Lemma \ref{Lm:LI} implies, for instance, that the binary operation
\begin{displaymath}
    \begin{array}{c|ccc}
        \circ&1&2&3\\
        \hline
        1&1&2&1\\
        2&2&2&1\\
        3&1&2&2
    \end{array}
\end{displaymath}
is a left identity element in $(\Omega_2(X),\odot_\star)$.\hfill $\lhd$
\end{example}

\vspace{0.25cm}

\begin{proposition}\label{proposition:LI}
It is satisfied that
\[I(\Omega_n(X),\odot_f) = \{g\in \Omega_n(X): \img{g}\subseteq I(X,f)\}.\]
In particular, the following two statements hold.
\begin{enumerate}
    \item If $I(X,f)=\{e\}$ for some $e\in X$, then $I(\Omega_n(X),\odot_f)=\{\gamma_e\}$.
    \item If $I(\Omega_n(X),\odot_f)=\{g\}$ for some $g\in\Omega_n(X)$, then $g=\gamma_e$ for some $e\in X$. Moreover, $I(X,f)=\{e\}$.
\end{enumerate}
\end{proposition}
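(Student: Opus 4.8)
The plan is to obtain the displayed equality as an immediate consequence of Lemma \ref{Lm:LI}(2) and then read off the two particular statements from it. Since $I(\Omega_n(X),\odot_f)=\bigcap_{i=0}^{m-1}I_i(\Omega_n(X),\odot_f)$ by definition, I would apply Lemma \ref{Lm:LI}(2) to each factor and rewrite the intersection as $\bigcap_{i=0}^{m-1}\{g\in\Omega_n(X):\img{g}\subseteq I_i(X,f)\}$. The only point to verify is the elementary set-theoretic fact that a function $g$ satisfies $\img{g}\subseteq I_i(X,f)$ for every $i$ if and only if $\img{g}\subseteq\bigcap_{i=0}^{m-1}I_i(X,f)=I(X,f)$; this yields the claimed description of $I(\Omega_n(X),\odot_f)$.

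For statement (1), assuming $I(X,f)=\{e\}$, I would invoke the equality just proved: the members of $I(\Omega_n(X),\odot_f)$ are exactly those $g$ with $\img{g}\subseteq\{e\}$. Because $X^n$ is nonempty, $\img{g}$ is nonempty, so $\img{g}\subseteq\{e\}$ forces $g(\overline{a})=e$ for all $\overline{a}\in X^n$, that is, $g=\gamma_e$. Hence $I(\Omega_n(X),\odot_f)=\{\gamma_e\}$.

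For statement (2), suppose $I(\Omega_n(X),\odot_f)=\{g\}$. Here the work is to pin down $I(X,f)$, and the main (mild) obstacle is ruling out the two degenerate possibilities via the main equality. If $I(X,f)$ were empty, then no function could have image contained in it, so $I(\Omega_n(X),\odot_f)$ would be empty, contradicting the hypothesis. If $I(X,f)$ contained two distinct elements $e_1\neq e_2$, then $\gamma_{e_1}$ and $\gamma_{e_2}$ would be two distinct elements of $I(\Omega_n(X),\odot_f)$, again a contradiction; the key fact here is that each constant map $\gamma_c$ has image $\{c\}$, so any single element of $I(X,f)$ is realized as the image of a constant map, and distinct constants give distinct maps. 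Therefore $I(X,f)=\{e\}$ for a single $e\in X$, and statement (1) then forces $g=\gamma_e$, completing the argument.
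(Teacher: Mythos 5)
Your proposal is correct and follows essentially the same route as the paper: the displayed equality is read off from Lemma \ref{Lm:LI}(2) together with the definition of $I(\Omega_n(X),\odot_f)$ as an intersection, and the two particular statements are obtained by observing that the only functions with image inside a singleton are constant maps, while an $I(X,f)$ that is empty or has two elements would yield zero or at least two identities in $(\Omega_n(X),\odot_f)$. Your write-up is in fact somewhat more explicit than the paper's, which compresses the second statement into a single sentence, but the underlying argument is identical.
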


\begin{proof} The result holds readily from Lemma \ref{Lm:LI}. Concerning the second statement, if $g\in I(\Omega_n(X),\odot_f)$ and $\img{g}$ is not a singleton, then $I(X,f)$ is not a singleton and there is therefore more than one element in $I(\Omega_n(X),\odot_f)$.    
\end{proof}

\vspace{0.05cm}

The operator $\odot$ also preserves inverse elements.

\begin{proposition}\label{proposition_inverse}
Let $(X,f)$ be an $m$-ary monoid. The following statements hold.
\begin{enumerate}
\item Let $g\in\Omega_n(X)$ be an invertible element in $(\Omega_n(X),\odot_f)$. Then,
\[\mathrm{Inv}\left(g,\,\odot_f\right)=\left\{h\in\Omega_n(X)\colon\, h(\overline{a})\in\mathrm{Inv}(g(\overline{a}),\,f) \text{ for all } \overline{a}\in X^n\right\}.\]

\item Let $c\in X$. If $c^{-1}\in \mathrm{Inv}(c,f)$, then $\gamma_{c^{-1}}\in\mathrm{Inv}\left(\gamma_c,\, \odot_f\right)$.

\item Let $c\in X$. If $(\gamma_c)^{-1}\in\mathrm{Inv}\left(\gamma_c,\, \odot_f\right)$, then $\img{(\gamma_c)^{-1}}\subseteq \mathrm{Inv}(c,f)$.

    \item If $(X,f)$ has unique inverses, then $(\Omega_n(X),\odot_f)$ also has unique inverses. More precisely, the inverse $g^{-1}$ of $g\in\Omega_n(X)$ is given by $g^{-1}(\overline{a}) = \left(g(\overline{a})\right)^{-1}$ for all $\overline{a}\in X^n$.
    
    \item If $(\Omega_n(X),\odot_f)$ has unique inverses, then $(X,f)$ also has unique inverses.
\end{enumerate}
\end{proposition}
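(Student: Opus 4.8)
The plan is to fix an arbitrary $c \in X$ and prove that $\mathrm{Inv}(c,f)$ is a singleton, which is precisely the condition for $(X,f)$ to have unique inverses. The whole argument will run through the embedding $c \mapsto \gamma_c$ together with parts (2) and (3) of the present proposition, which pass inverse data back and forth between $(X,f)$ and $(\Omega_n(X),\odot_f)$. As a preliminary observation I would record that $(\Omega_n(X),\odot_f)$ is an $m$-ary monoid: since $(X,f)$ is a monoid there is some $e \in I(X,f)$, and then $\gamma_e \in I(\Omega_n(X),\odot_f)$ by Proposition \ref{proposition:LI}; consequently the hypothesis that $(\Omega_n(X),\odot_f)$ has unique inverses is meaningful and guarantees, in particular, that each constant map $\gamma_c$ is invertible and possesses exactly one inverse $(\gamma_c)^{-1}$.

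First I would establish existence of an inverse of $c$. Since $\gamma_c$ is invertible, part (3) applies and yields $\img{(\gamma_c)^{-1}} \subseteq \mathrm{Inv}(c,f)$. As $X^n \neq \emptyset$, the image $\img{(\gamma_c)^{-1}}$ is nonempty, so $\mathrm{Inv}(c,f) \neq \emptyset$; thus $c$ has at least one inverse in $(X,f)$.

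Next I would prove uniqueness. Suppose $d_1, d_2 \in \mathrm{Inv}(c,f)$. Applying part (2) to each gives $\gamma_{d_1}, \gamma_{d_2} \in \mathrm{Inv}(\gamma_c,\odot_f)$. Because $(\Omega_n(X),\odot_f)$ has unique inverses, $\mathrm{Inv}(\gamma_c,\odot_f)$ is a singleton, forcing $\gamma_{d_1} = \gamma_{d_2}$; evaluating both sides at any $\overline{a} \in X^n$ (equivalently, using the injectivity of $c \mapsto \gamma_c$ noted in Lemma \ref{lemma_embedding}) gives $d_1 = d_2$. Together with the previous paragraph this shows $|\mathrm{Inv}(c,f)| = 1$, and since $c$ was arbitrary, $(X,f)$ has unique inverses.

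I do not expect a genuine obstacle, as this is simply the converse of part (4) and almost all of the substance is already encapsulated in parts (2) and (3). The only point that warrants care is the very first one: one must make sure the hypothesis actually bears on the constant maps, i.e.\ that each $\gamma_c$ is an invertible element of $(\Omega_n(X),\odot_f)$, which is why I verify at the outset that this groupoid is a monoid. After that, the sole mechanical step is the routine passage from an equality of constant maps to an equality of their values.
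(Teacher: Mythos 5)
Your argument for item (5) is correct and is, in essence, the paper's own proof of that item: the paper likewise passes to the constant maps $\gamma_c$, uses item (3) to produce an inverse of $c$ from the unique inverse of $\gamma_c$, and uses item (2) to rule out a second inverse of $c$. Your version is if anything slightly cleaner, since you get existence directly from the nonemptiness of $\img{(\gamma_c)^{-1}}$ rather than first arguing that this image is a singleton, and you are right to check up front (via Proposition \ref{proposition:LI}) that $(\Omega_n(X),\odot_f)$ is an $m$-ary monoid so that the unique-inverses hypothesis actually applies to each $\gamma_c$.

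The gap is one of coverage rather than of logic: the statement consists of five items, and your proposal establishes only item (5), taking items (1)--(3) as given. Since your proof of (5) leans on (2) and (3), those cannot be left unproved. The paper's route is to prove item (1) first --- $h\in\mathrm{Inv}(g,\odot_f)$ if and only if $h(\overline{a})\in\mathrm{Inv}(g(\overline{a}),f)$ for all $\overline{a}\in X^n$, which follows by evaluating $\odot_f(g,\ldots,g,h,g,\ldots,g)$ pointwise and invoking Proposition \ref{proposition:LI} to identify the identity elements of $(\Omega_n(X),\odot_f)$ as exactly the maps whose image lies in $I(X,f)$ --- after which items (2)--(4) are immediate consequences. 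You would need to supply at least that much for a complete proof of the proposition; the part you did write needs no repair.
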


\begin{proof} Proposition \ref{proposition:LI} implies that an $n$-ary operation $h\in\Omega_n(X)$ is an inverse of an invertible $g\in\Omega_n(X)$ in $\left(\Omega_n(X),\,\odot_f\right)$ if and only if, for each non-negative integer $i<m$ and each $\overline{a}\in X$, there exists an identity element $e_{i,\overline{a}}\in I(X,f)$ such that
\[\odot_f(\underbrace{g,\ldots,g}_i,h,\underbrace{g,\ldots,g}_{m-i-1})(\overline{a})=f(\underbrace{g(\overline{a}),\ldots,g(\overline{a})}_i,h(\overline{a}),\underbrace{g(\overline{a}),\ldots,g(\overline{a})}_{m-i-1})=e_{i,\overline{a}}.\]
That is, if and only if $h(\overline{a})\in\mathrm{Inv}(g(\overline{a}),\,f)$ for all $\overline{a}\in X^n$. Hence, the first statement holds.

Statements (2)--(4) follow readily from the first one. (Note in the third statement that, if $I(X,f)=\{e\}$, then Proposition \ref{proposition:LI} implies that $I(\Omega_n(X),\odot_f)=\{\gamma_e\}$.) Now, if $(\Omega_n(X),\odot_f)$ has unique inverses, then we have from Proposition \ref{proposition:LI} that $I(\Omega_n(X),\odot_f)=\{\gamma_e\}$ where  $I(X,f)=\{e\}$. In addition, we have from the first statement that, for each $c\in X$,  $\img{(\gamma_c)^{-1}}\subseteq \mathrm{Inv}(c,f)$. As a consequence, the first statement also implies that $\gamma_u\in\mathrm{Inv}\left(\gamma_c,\,\odot_f\right)$ for all $u\in\img{(\gamma_c)^{-1}}$. Since $(\Omega_n(X),\odot_f)$ has unique inverses, it must be $|\img{(\gamma_c)^{-1}}|=1$. Let $c^{-1}$ be the unique element in $\img{(\gamma_c)^{-1}}$. Then, $c^{-1}\in\mathrm{Inv}(c,f)$ from the third statement. It is necessarily unique from the second statement. Hence, the fifth statement holds.
\hfill \, 
\end{proof}

\vspace{0.25cm}

Now, we show how the operator $\odot$ preserves isotopisms and conjugates.

\begin{lemma}\label{lemma_isotopism} If $(X,f)$ is isotopic to an $m$-ary groupoid 
$(X,g)$, then $(\Omega_n(X),\odot_f)$ is isotopic to $(\Omega_n(X),\odot_g)$.
\end{lemma}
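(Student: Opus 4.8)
The plan is to lift the given isotopism on $X$ to one on $\Omega_n(X)$ by \emph{post-composition}. Assume $(\pi_1,\ldots,\pi_{m+1})$ is an isotopism from $(X,f)$ to $(X,g)$, so that
\[g\left(\pi_1(a_1),\ldots,\pi_m(a_m)\right)=\pi_{m+1}\left(f(a_1,\ldots,a_m)\right)\]
for all $a_1,\ldots,a_m\in X$. For each $i\in\{1,\ldots,m+1\}$, I would define a map $\Pi_i\colon\Omega_n(X)\to\Omega_n(X)$ by $\Pi_i(h):=\pi_i\circ h$; that is, $\Pi_i(h)(\overline{a}):=\pi_i\left(h(\overline{a})\right)$ for all $\overline{a}\in X^n$. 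The claim will be that $(\Pi_1,\ldots,\Pi_{m+1})$ is an isotopism from $(\Omega_n(X),\odot_f)$ to $(\Omega_n(X),\odot_g)$.

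First I would check that each $\Pi_i$ lies in $\mathcal{S}_{\Omega_n(X)}$. This is immediate: since $\pi_i\in\mathcal{S}_X$ is a bijection, the assignment $h\mapsto\pi_i\circ h$ is invertible, with inverse $h\mapsto\pi_i^{-1}\circ h$, so $\Pi_i$ is a permutation of $\Omega_n(X)$. The main step is then the verification of the isotopism identity, which is a pointwise computation. Fix $h_1,\ldots,h_m\in\Omega_n(X)$ and $\overline{a}\in X^n$. Unwinding the definition of $\odot_g$ in (\ref{eq_odot}) together with that of the $\Pi_i$ gives $\odot_g\!\left(\Pi_1(h_1),\ldots,\Pi_m(h_m)\right)(\overline{a})=g\!\left(\pi_1(h_1(\overline{a})),\ldots,\pi_m(h_m(\overline{a}))\right)$. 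Applying the isotopism identity for $f$ and $g$ with $a_i=h_i(\overline{a})$ turns the right-hand side into $\pi_{m+1}\!\left(f(h_1(\overline{a}),\ldots,h_m(\overline{a}))\right)=\pi_{m+1}\!\left(\odot_f(h_1,\ldots,h_m)(\overline{a})\right)=\Pi_{m+1}\!\left(\odot_f(h_1,\ldots,h_m)\right)(\overline{a})$. Since this holds for every $\overline{a}\in X^n$, I obtain $\odot_g\!\left(\Pi_1(h_1),\ldots,\Pi_m(h_m)\right)=\Pi_{m+1}\!\left(\odot_f(h_1,\ldots,h_m)\right)$, as required.

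There is no genuine obstacle here; the argument reduces to a single evaluation once the correct lifting is identified. The only mild subtlety worth flagging is the choice of \emph{post}-composition rather than pre-composition: the inputs $h_1,\ldots,h_m$ all receive the same argument $\overline{a}$, and the isotopism of $f$ and $g$ acts on the $m$ \emph{outputs} $a_i=h_i(\overline{a})$ fed into $f$ (and on the final value via $\pi_{m+1}$), not on the entries of $\overline{a}$. Consequently, lifting the $\pi_i$ by acting on the argument tuple would fail to match the structure of $\odot_f$, whereas post-composing with $\pi_i$ interacts with it exactly. I would close by noting that the same permutations realize the isomorphism case when $\pi_1=\cdots=\pi_{m+1}$, since then $\Pi_1=\cdots=\Pi_{m+1}$ as well.
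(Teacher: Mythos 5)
Your proof is correct and follows exactly the paper's argument: the maps $\Pi_i(h)=\pi_i\circ h$ are the same permutations $\rho_i$ used in the paper, each a bijection of $\Omega_n(X)$ because $\pi_i\in\mathcal{S}_X$, and the pointwise verification of the isotopism identity is identical. No differences worth noting.
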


    \begin{proof}
    Let $(\pi_1,\ldots,\pi_{m+1})$ be an isotopism from $(X,f)$ to $(X,g)$. For each positive integer $i\leq m+1$, we define the map 

\[\begin{array}{cccc}
\rho_i:& \Omega_n(X) & \to & \Omega_n(X)\\
& h &\mapsto & \pi_i\circ h
\end{array}\]
where $\circ$ denotes the composition of maps. (That is, $\rho_i(h)(\overline{a}):=\pi_i(h(\overline{a}))$, for all $\overline{a}\in X^n$.) This map is a bijection because $\pi_i\in\mathcal{S}_X$. Finally, let $h_1,\ldots,h_m\in\Omega_n(X)$ and $\overline{a}\in X^n$. Since $(\pi_1,\ldots,\pi_{m+1})$ is an isotopism from $(X,f)$ to $(X,g)$, we have that
\begin{align*}
\odot_g\left(\rho_1(h_1),\ldots,\rho_m(h_m)\right)(\overline{a}) & =g(\pi_1(h_1(\overline{a})),\ldots,\pi_m(h_m(\overline{a})))=\\
& =\pi_{m+1}(f(h_1(\overline{a}),\ldots,h_m(\overline{a})))=\\
& =\pi_{m+1}(\odot_f(h_1,\ldots,h_m)(\overline{a}))=\\
& =\rho_{m+1}(\odot_f(h_1,\ldots,h_m))(\overline{a}).
\end{align*}
Thus, $\odot_g\left(\rho_1(h_1),\ldots,\rho_m(h_m)\right)=\rho_{m+1}(\odot_f(h_1,\ldots,h_m))$ and hence, $(\rho_1,\ldots,\rho_{m+1})$ is an isotopism from $(\Omega_n(X),\odot_f)$ to $(\Omega_n(X),\odot_g)$.
\end{proof}

\vspace{0.25cm}

\begin{lemma}\label{lemma_conjugate} $(X,f)$ is conjugate to an $m$-ary groupoid 
$(X,g)$ if and only if $(\Omega_n(X),\odot_f)$ is conjugate to $(\Omega_n(X),\odot_g)$. Moreover, if $g=f^\pi$ for some $\pi\in \mathcal{S}_{m+1}$, then $\odot_g=(\odot_f)^\pi$, and reciprocally.
\end{lemma}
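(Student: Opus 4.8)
The plan is to isolate the content of the lemma in a single per-permutation equivalence: for each fixed $\pi \in \mathcal{S}_{m+1}$ I would show that $g = f^\pi$ if and only if $\odot_g = (\odot_f)^\pi$. Granting this, the stated biconditional is immediate, since $(X,f)$ being conjugate to $(X,g)$ means $g = f^\pi$ for some $\pi$, and by the per-$\pi$ equivalence this same $\pi$ witnesses $\odot_g = (\odot_f)^\pi$, i.e. the conjugacy of $(\Omega_n(X),\odot_f)$ and $(\Omega_n(X),\odot_g)$, and conversely. So I would spend the whole proof on the fixed-$\pi$ statement, unwinding both conjugacy relations through (\ref{eq_odot}) into pointwise conditions over $X^n$.

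For the forward implication, I would fix $\pi$ with $g = f^\pi$ and arbitrary $G_1,\dots,G_{m+1} \in \Omega_n(X)$. By (\ref{eq_odot}), the equation $\odot_g(G_{\pi(1)},\dots,G_{\pi(m)}) = G_{\pi(m+1)}$ unwinds, for each $\overline{a} \in X^n$, to $g(G_{\pi(1)}(\overline{a}),\dots,G_{\pi(m)}(\overline{a})) = G_{\pi(m+1)}(\overline{a})$, and analogously for $\odot_f$. Writing $a_i := G_i(\overline{a})$ and applying the defining equivalence of $g = f^\pi$ at each fixed $\overline{a}$ gives a pointwise equivalence between the two element-level equations; since a universally quantified statement holds exactly when an equivalent one does, this lifts verbatim to the equality of $n$-ary operations, which is precisely $\odot_g = (\odot_f)^\pi$.

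For the converse, I would exploit the constant maps $\gamma_c$ from (\ref{eq_constant}). Given $\odot_g = (\odot_f)^\pi$ and arbitrary $a_1,\dots,a_{m+1} \in X$, I would instantiate the operation-level equivalence at $G_i := \gamma_{a_i}$. By Lemma \ref{lemma_embedding}, applied to $f$ and to $g$, one has $\odot_f(\gamma_{a_1},\dots,\gamma_{a_m}) = \gamma_{f(a_1,\dots,a_m)}$ and $\odot_g(\gamma_{a_{\pi(1)}},\dots,\gamma_{a_{\pi(m)}}) = \gamma_{g(a_{\pi(1)},\dots,a_{\pi(m)})}$; and since $c \mapsto \gamma_c$ is injective, each operation-level equation of the form $\gamma_u = \gamma_{a_{m+1}}$ collapses to the element-level equation $u = a_{m+1}$. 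Feeding the constant maps into $\odot_g = (\odot_f)^\pi$ therefore produces exactly $g(a_{\pi(1)},\dots,a_{\pi(m)}) = a_{\pi(m+1)}$ if and only if $f(a_1,\dots,a_m) = a_{m+1}$, which is the definition of $g = f^\pi$.

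I expect the converse to be the only delicate step, because it requires descending from an identity of $n$-ary operations to an identity of elements of $X$; the constant-map embedding of Lemma \ref{lemma_embedding}, together with the injectivity of $c \mapsto \gamma_c$, is precisely the tool that makes this descent valid. The forward direction is routine once one observes that the pointwise equivalence transfers directly to the universally quantified operation-level statements.
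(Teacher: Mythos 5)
Your proposal is correct and follows essentially the same route as the paper's proof: the forward direction by unwinding both conjugacy conditions pointwise over $\overline{a}\in X^n$ via (\ref{eq_odot}), and the converse by instantiating at the constant maps $\gamma_c$ and using the embedding of Lemma \ref{lemma_embedding}. Your explicit remark about lifting the pointwise equivalence through the universal quantifier is a detail the paper leaves implicit, but the argument is the same.
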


\begin{proof}
Let $\pi \in \mathcal{S}_{m+1}$ be such that $g=f^\pi$. For each $h_1,\ldots,h_{m+1}\in \Omega_n(X)$ and $\overline{a}\in X^n$, we have that

\begin{align*}
\odot_f(h_1,\ldots,h_m)(\overline{a})=h_{m+1}(\overline{a}) & \Leftrightarrow f\left(h_1(\overline{a}),\ldots,h_m(\overline{a})\right)=h_{m+1}(\overline{a}) \Leftrightarrow \\
& \Leftrightarrow f^\pi\left(h_{\pi(1)}(\overline{a}),\ldots,h_{\pi(m)}(\overline{a})\right)=h_{\pi(m+1)}(\overline{a}) \Leftrightarrow \\
& \Leftrightarrow \odot_{f^\pi}\left(h_{\pi(1)},\ldots,h_{\pi(m)}\right)(\overline{a})=h_{\pi(m+1)}(\overline{a}).
\end{align*}
Hence, $(\Omega_n(X),\odot_f)$ and $(\Omega_n(X),\odot_g)$ are conjugate and $\odot_g=(\odot_f)^\pi$.

Conversely, let $\pi \in \mathcal{S}_{m+1}$ be such that $\odot_g=(\odot_f)^\pi$. For each $c_1,\ldots,c_{m+1}\in X$, we have that
\begin{align*}
f(c_1,\ldots,c_m)=c_{m+1} & \Leftrightarrow \odot_f\left(\gamma_{c_1},\ldots,\gamma_{c_m}\right)=\gamma_{c_{m+1}} \Leftrightarrow \\
& \Leftrightarrow (\odot_f)^\pi\left(\gamma_{c_{\pi(1)}},\ldots,\gamma_{c_{\pi(m)}}\right)=\gamma_{c_{\pi(m+1)}} \Leftrightarrow \\
& \Leftrightarrow g\left(c_{\pi(1)},\ldots,c_{\pi(m)}\right)=c_{\pi(m+1)}.
\end{align*}
Hence, $(X,f)$ and $(X,g)$ are conjugate and $g=f^\pi$.
\end{proof}

\vspace{0.25cm}

\begin{lemma}\label{lemma_conjugate_2} Let $\pi\in \mathcal{S}_{n+1}$ be such that $\pi(n+1)=n+1$. Then, for each $g_1,\ldots,g_m\in\Omega_n(X)$, 
\[\odot_f(g_1^\pi,\ldots,g_m^\pi)=\left(\odot_f(g_1,\ldots,g_m)\right)^\pi.\]
\end{lemma}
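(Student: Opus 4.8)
The plan is to reduce everything to one observation: because $\pi$ fixes the last coordinate, the conjugate $g^\pi$ of an $n$-ary operation is merely a reindexing of its arguments, and this reindexing is common to both sides of the claimed identity. Note first that all the conjugates appearing in the statement exist, since $\pi(n+1)=n+1$ (as recalled in the preliminaries).

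First I would unwind the definition of conjugate for the $n$-ary operations involved. Write $\sigma$ for the restriction of $\pi$ to $\{1,\dots,n\}$, which is a genuine permutation of $\{1,\dots,n\}$ precisely because $\pi(n+1)=n+1$. Specializing the defining equivalence for $g^\pi$ at $b_{n+1}=g(b_1,\dots,b_n)$ yields
\[g^\pi\!\left(b_{\pi(1)},\dots,b_{\pi(n)}\right)=g(b_1,\dots,b_n)\]
for all $b_1,\dots,b_n\in X$. Setting $\overline{a}=(b_{\pi(1)},\dots,b_{\pi(n)})$, so that $b_k=a_{\sigma^{-1}(k)}$, turns this into the closed formula $g^\pi(\overline{a})=g(\overline{a}_{\sigma^{-1}})$, where I write $\overline{a}_{\sigma^{-1}}:=(a_{\sigma^{-1}(1)},\dots,a_{\sigma^{-1}(n)})$. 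This is the step I expect to require the most care, as one must track whether the arguments end up permuted by $\sigma$ or by $\sigma^{-1}$; it is also the only place where the hypothesis $\pi(n+1)=n+1$ enters the computation.

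With this formula in hand, both sides become a direct substitution. For the left-hand side I would apply (\ref{eq_odot}) and then the formula for each $g_i^\pi$, obtaining
\[\odot_f(g_1^\pi,\dots,g_m^\pi)(\overline{a})=f\!\left(g_1^\pi(\overline{a}),\dots,g_m^\pi(\overline{a})\right)=f\!\left(g_1(\overline{a}_{\sigma^{-1}}),\dots,g_m(\overline{a}_{\sigma^{-1}})\right).\]
For the right-hand side I would apply the same conjugate formula to the single $n$-ary operation $\odot_f(g_1,\dots,g_m)$, giving $\left(\odot_f(g_1,\dots,g_m)\right)^\pi(\overline{a})=\odot_f(g_1,\dots,g_m)(\overline{a}_{\sigma^{-1}})$, and then unfold this with (\ref{eq_odot}) to reach $f(g_1(\overline{a}_{\sigma^{-1}}),\dots,g_m(\overline{a}_{\sigma^{-1}}))$.

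Comparing the two expressions shows they coincide for every $\overline{a}\in X^n$, which is the claim. The conceptual reason, worth stating explicitly, is that in the definition of $\odot_f$ all of $g_1,\dots,g_m$ are evaluated at one and the same tuple; conjugating by a $\pi$ that fixes the output slot permutes that tuple without touching the outer application of $f$, so the permutation factors out identically whether it is applied to each $g_i$ first or to the composite $\odot_f(g_1,\dots,g_m)$ afterwards. No genuine obstacle remains once the reindexing formula of the first step is established.
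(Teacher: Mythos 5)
Your proof is correct and is essentially the paper's own argument: both rest on the single observation that when $\pi$ fixes the output slot, conjugation is just a permutation of the input tuple, which commutes with the pointwise application of $f$ in $\odot_f$. The only cosmetic difference is that the paper evaluates both sides at $\overline{a}^\pi:=(a_{\pi(1)},\ldots,a_{\pi(n)})$ and uses the identity $g^\pi(\overline{a}^\pi)=g(\overline{a})$, whereas you evaluate at $\overline{a}$ via the equivalent closed formula $g^\pi(\overline{a})=g(\overline{a}_{\sigma^{-1}})$.
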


\begin{proof} For each $\overline{a}:=(a_1,\ldots,a_n)\in X^n$, let $\overline{a}^{\pi}:=(a_{\pi(1)},\ldots,a_{\pi(n)})$. Since $\pi(n+1)=n+1$, we have that

\begin{align*}
\odot_f(g_1^\pi,\ldots,g_m^\pi)(\overline{a}^\pi) & =f(g_1^\pi(\overline{a}^\pi),\ldots,g_m^\pi(\overline{a}^\pi))=\\
&=f(g_1(\overline{a}),\ldots,g_m(\overline{a}))=\\
& =\odot_f(g_1,\ldots,g_m)(\overline{a})=\\
& =(\odot_f(g_1,\ldots,g_m))^\pi(\overline{a}^\pi).
\end{align*}
Hence, the result holds.
\end{proof}

\vspace{0.25cm}

We finish this section with a preliminary result concerning orthogonality. Thus, we focus on the case $m=n$.

\begin{lemma}\label{lemma_permutation} Let $\{g_1,\ldots,g_m\}\subseteq \Omega_m(X)$. This is a set of orthogonal $m$-ary operations if and only if the map
\begin{equation}\label{eq:map_permutation}
\begin{array}{cccc}
& \Omega_m(X) & \to & \Omega_m(X)\\
& f &\mapsto & \odot_f(g_1,\ldots,g_m)
\end{array}
\end{equation}
is a bijection.
\end{lemma}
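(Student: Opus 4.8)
The plan is to recognize that the superposition operator packages the $g_i$ into a single self-map of $X^m$, after which the statement reduces to an elementary fact about precomposition on a function space. First I would introduce the map $G\colon X^m\to X^m$ defined by $G(\overline{a}):=(g_1(\overline{a}),\ldots,g_m(\overline{a}))$, so that by definition (see (\ref{eq:map_permutation2})) orthogonality of $\{g_1,\ldots,g_m\}$ is precisely the bijectivity of $G$. The key identity, immediate from (\ref{eq_odot}) with $m=n$, is that $\odot_f(g_1,\ldots,g_m)=f\circ G$ for every $f\in\Omega_m(X)$. Hence the map in (\ref{eq:map_permutation}) is exactly the precomposition map $\Phi\colon\Omega_m(X)\to\Omega_m(X)$, $f\mapsto f\circ G$, where $\Omega_m(X)$ is regarded as the set of all functions $X^m\to X$.

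With this reformulation the lemma becomes the assertion that $\Phi$ is a bijection if and only if $G$ is a bijection. For the forward direction I would simply observe that if $G$ is a bijection, then precomposition with $G^{-1}$ provides a two-sided inverse for $\Phi$, since $(f\circ G)\circ G^{-1}=f$ and $(h\circ G^{-1})\circ G=h$ for all $f,h\in\Omega_m(X)$; both composites again send $X^m$ to $X$, so $\Phi$ is a bijection.

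For the converse I would argue by contraposition, separately ruling out failure of injectivity and of surjectivity of $G$; the only hypothesis needed is $|X|\geq 2$, the case $|X|=1$ being trivial since $\Omega_m(X)$ is then a singleton. If $G$ is not injective, say $G(\overline{a})=G(\overline{b})$ with $\overline{a}\neq\overline{b}$, then every $\Phi(f)=f\circ G$ agrees at $\overline{a}$ and $\overline{b}$; choosing any $h\in\Omega_m(X)$ with $h(\overline{a})\neq h(\overline{b})$ shows $h\notin\img{\Phi}$, so $\Phi$ is not surjective. If $G$ is not surjective, I would pick $\overline{c}\in X^m\setminus\img{G}$ and two operations $f_1\neq f_2$ that differ only at $\overline{c}$; then $f_1\circ G=f_2\circ G$, so $\Phi$ is not injective. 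Consequently, if $\Phi$ is a bijection, then $G$ is both injective and surjective, i.e. $\{g_1,\ldots,g_m\}$ is orthogonal.

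I do not anticipate a serious obstacle: the entire content is the identification $\odot_f(g_1,\ldots,g_m)=f\circ G$ together with the standard dictionary between surjectivity and injectivity of $G$ and injectivity and surjectivity of precomposition by $G$. The one point deserving care is that the converse relies on the existence of operations separating two points of $X^m$, which forces the harmless hypothesis $|X|\geq 2$; I would state this explicitly rather than leave it implicit.
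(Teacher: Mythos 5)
Your proposal is correct and follows essentially the same route as the paper: both arguments reduce the statement to properties of the map $\overline{a}\mapsto(g_1(\overline{a}),\ldots,g_m(\overline{a}))$, and your contrapositive treatment of the converse (non-injectivity of $G$ kills surjectivity of $\Phi$, non-surjectivity of $G$ kills injectivity of $\Phi$, with $|X|=1$ handled separately) matches the paper's construction exactly. The only difference is cosmetic: in the forward direction you exhibit precomposition with $G^{-1}$ as a two-sided inverse, whereas the paper verifies surjectivity and injectivity of $\Phi$ separately, which amounts to the same computation.
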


\begin{proof} Let $\{g_1,\ldots,g_m\}\subseteq \Omega_m(X)$ be a set of orthogonal $m$-ary operations. First, we prove that the map (\ref{eq:map_permutation}) is onto. For each $h\in\Omega_m(X)$, let $f_h\in \Omega_m(X)$ be defined so that $f_h(g_1(\overline{a}),\ldots,g_m(\overline{a}))=h(\overline{a})$, for all $\overline{a}\in X^m$. It is well-defined because $\{g_1,\ldots,g_m\}$ is a set of orthogonal $m$-ary operations and thus, the map  (\ref{eq:map_permutation2}) is a bijection. As a consequence, $\odot_{f_h}(g_1,\ldots,g_m)=h$ and hence, the map (\ref{eq:map_permutation}) is onto.

In order to prove that the map (\ref{eq:map_permutation}) is injective, let $f_1,f_2\in\Omega_m(X)$ be such that $\odot_{f_1}(g_1,\ldots,g_m)=\odot_{f_2}(g_1,\ldots,g_m)$. Then, 
\begin{equation}\label{eq:lemma_permutation}
f_1(g_1(\overline{a}),\ldots,g_m(\overline{a}))=f_2(g_1(\overline{a}),\ldots,g_m(\overline{a}))
\end{equation}
for all $\overline{a}\in X^m$. If $\{g_1,\ldots,g_m\}$ is a set of orthogonal $m$-ary operations, then (\ref{eq:lemma_permutation}) implies that $f_1(\overline{a})=f_2(\overline{a})$ for all $\overline{a}\in X^m$. So, $f_1=f_2$ and hence, the map (\ref{eq:map_permutation}) is injective.

For the converse, suppose that the map (\ref{eq:map_permutation}) is a bijection, but the map (\ref{eq:map_permutation2}) is not a bijection. If this last map is not onto, then there is an element $\overline{a}\in X^m$ such that $(g_1(\overline{b}),\ldots,g_m(\overline{b}))\neq \overline{a}$, for all $\overline{b}\in X^m$. If $|X|=1$, then $|\Omega_m(X)|=1$ and $\{g_1,\dots,g_m\}$ is a set of orthogonal $m$-ary operations mapping the unique $m$-tuple in $X^m$ to itself. So, we may assume that $|X|>1$. Let $x,y\in X$ be such that $x\neq y$. Then, let $f_1,f_2\in \Omega_m(X)$ be defined so that 
\begin{itemize}
    \item $f_1\left(g_1(\overline{b}),\ldots,g_m(\overline{b})\right)=f_2\left(g_1(\overline{b}),\ldots,g_m(\overline{b})\right)=x$, for all $\overline{b}\in X^m$;
    \item $f_1(\overline{a})=x$;
    \item $f_2(\overline{a})=y$; and
    \item the rest of $f_1$ and $f_2$ are arbitrarily defined.
\end{itemize}
Then, $\odot_{f_1}(g_1,\ldots,g_m)= \odot_{f_2}(g_1,\ldots,g_m)$, but $f_1\neq f_2$, so the map (\ref{eq:map_permutation}) is not injective, which is a contradiction.

Finally, suppose that the map (\ref{eq:map_permutation2}) is not one-to-one. Let $\overline{a}\ne\overline{b}\in X^m$ be such that $(g_1(\overline a),\dots,g_m(\overline a)) = (g_1(\overline b),\dots,g_m(\overline b))$. Then, for every $f\in\Omega_m(X)$, we have that $\odot_f(g_1,\dots,g_m)(\overline a) = \odot_f(g_1,\dots,g_m)(\overline b)$. But then, any $h\in\Omega_m(X)$ such that $h(\overline a)\ne h(\overline b)$ shows that the map (\ref{eq:map_permutation}) is not onto, which is a contradicition.
\end{proof}

\section{The Hadamard multiary quasigroup product}\label{sec:Hadamard}

In what follows, we focus on the case in which $(X,f)$ is an $m$-ary quasigroup. That is, $f\in\mathcal{Q}_m(X)$. If $m=2$, then the operation $\odot_f$ is the Hadamard quasigroup product described in (\ref{eq_odot_H}). First, we show how the quasigroup structure of $(X,f)$ is preserved by the operator $\odot$. To this end, we make use of the embedding described in Lemma \ref{lemma_embedding}.

\begin{proposition}\label{proposition_quasigroup}
$f\in\mathcal{Q}_m(X)$ if and only if $\odot_f\in\mathcal{Q}_m\left(\Omega_n(X)\right)$.
\end{proposition}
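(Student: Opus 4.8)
The plan is to verify the $m$-ary quasigroup condition coordinate-by-coordinate in both directions, exploiting the fact that $\odot_f$ is evaluated pointwise on $X^n$ via (\ref{eq_odot}). Fix a coordinate index $i\leq m$. For the forward implication, assume $f\in\mathcal{Q}_m(X)$ and fix arbitrary $g_1,\ldots,g_{i-1},g_{i+1},\ldots,g_m,h\in\Omega_n(X)$; I want to produce a unique $g_i\in\Omega_n(X)$ with $\odot_f(g_1,\ldots,g_m)=h$. Unravelling (\ref{eq_odot}), this is the requirement that $f(g_1(\overline{a}),\ldots,g_i(\overline{a}),\ldots,g_m(\overline{a}))=h(\overline{a})$ for every $\overline{a}\in X^n$. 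Since $f$ is a quasigroup, for each fixed $\overline{a}$ the equation in the $i$-th slot has a unique solution in $X$; defining $g_i(\overline{a})$ to be that solution produces the desired $n$-ary operation, and pointwise uniqueness forces uniqueness of $g_i$ as an element of $\Omega_n(X)$. This is the straightforward half.

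For the converse, assume $\odot_f\in\mathcal{Q}_m(\Omega_n(X))$ and fix $a_1,\ldots,a_{i-1},a_{i+1},\ldots,a_m,b\in X$; I want a unique $a_i\in X$ with $f(a_1,\ldots,a_i,\ldots,a_m)=b$. Here I would invoke the embedding $c\mapsto\gamma_c$ of Lemma \ref{lemma_embedding} and feed the quasigroup property of $\odot_f$ the constant operations $\gamma_{a_1},\ldots,\gamma_{a_m}$ together with the target $\gamma_b$. For existence, the quasigroup property of $\odot_f$ yields some $g_i$ with $\odot_f(\gamma_{a_1},\ldots,g_i,\ldots,\gamma_{a_m})=\gamma_b$, that is, $f(a_1,\ldots,g_i(\overline{a}),\ldots,a_m)=b$ for all $\overline{a}$; evaluating at any fixed $\overline{a}_0$ and setting $a_i:=g_i(\overline{a}_0)$ gives a value with $f(a_1,\ldots,a_i,\ldots,a_m)=b$. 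For uniqueness, if two values $a_i,a_i'$ both map to $b$, then by (\ref{eq_constant}) the constant maps $\gamma_{a_i}$ and $\gamma_{a_i'}$ are both solutions of $\odot_f(\gamma_{a_1},\ldots,x,\ldots,\gamma_{a_m})=\gamma_b$, so uniqueness for $\odot_f$ gives $\gamma_{a_i}=\gamma_{a_i'}$, whence $a_i=a_i'$.

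The only point requiring a little care, and the closest thing to an obstacle, is the existence step of the converse: the solution operation $g_i$ supplied by the quasigroup property of $\odot_f$ need not be constant a priori, so one must not silently assume $g_i=\gamma_{a_i}$. The resolution is that every value $g_i(\overline{a}_0)$ is a valid witness because the defining identity holds for all $\overline{a}$; alternatively, once uniqueness of $a_i$ has been established it follows retroactively that $g_i$ was forced to equal the constant $\gamma_{a_i}$. The remaining verifications are routine translations between equalities of $n$-ary operations and pointwise equalities in $X$, carried out through (\ref{eq_odot}) and (\ref{eq_constant}).
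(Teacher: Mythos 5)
Your proof is correct and follows essentially the same route as the paper: the forward implication is obtained by solving the equation pointwise at each $\overline{a}\in X^n$ using the quasigroup property of $f$, and the converse is obtained by feeding the constant operations $\gamma_c$ of Lemma \ref{lemma_embedding} into the quasigroup property of $\odot_f$. If anything, your converse is slightly more thorough, since you verify both existence and uniqueness of the solution $a_i$, whereas the paper's proof of that direction only records the cancellation (uniqueness) half.
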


\begin{proof}
Let $f\in\mathcal{Q}_m(X)$ and $g_1,\ldots,g_m\in\Omega_n(X)$. Then, let $h:=\odot_f(g_1,\ldots,g_m)\in\Omega_n$ and  $\overline{x}\in X^n$. From (\ref{eq_odot}), we have that
\begin{equation}\label{Eq:Q}
f\left(g_1\left(\overline{x}\right),\ldots,g_m\left(\overline{x}\right)\right)=h\left(\overline{x}\right).
\end{equation}
Since $f\in\mathcal{Q}_m(X)$, for each $i\in\{1,\ldots,m\}$, there exists a unique  $a_{\overline{x},i}\in X$ such that 
\[f\left(g_1\left(\overline{x}\right),\ldots,g_{i-1}\left(\overline{x}\right),\,a_{\overline{x},i},\, g_{i+1}\left(\overline{x}\right),\,\ldots,g_m\left(\overline{x}\right)\right)=h\left(\overline{x}\right).\]
Thus, given the set $\{g_1,\ldots,g_{i-1},\,g_{i+1},\ldots,g_m,\,h\}$, there is a unique $g_i\in\Omega_m(X)$ satisfying \eqref{Eq:Q}, namely $g_i(\overline{x}):=a_{\overline{x},i}$. Hence, $\odot_f\in\mathcal{Q}_m\left(\Omega_n(X)\right)$.

Conversely, suppose that $\odot_f\in\mathcal{Q}_m\left(\Omega_n(X)\right)$. Let $a_1,\ldots,a_m\in X$  and $i\in\{1,\ldots,m\}$ be such that there exists $a'_i\in X$ satisfying that  
\[f(a_1,\ldots,a_{i-1},a_i,a_{i+1},\ldots,a_m)=f(a_1,\ldots,a_{i-1},a'_i,a_{i+1},\ldots,a_m).\]
From Lemma \ref{lemma_embedding}, we have that
\begin{align*}
\odot_f(\gamma_{a_1},\ldots,\gamma_{a_{i-1}},\,\gamma_{a_i},\,\gamma_{a_{i+1}},\,\ldots,\gamma_{a_m}) & = \gamma_{f(a_1,\ldots,a_{i-1},a_i,a_{i+1},\ldots,a_m)}=\\
& =\gamma_{f(a_1,\ldots,a_{i-1},a'_i,a_{i+1},\ldots,a_m)}=\\
& = \odot_f(\gamma_{a_1},\ldots,\gamma_{a_{i-1}},\,\gamma_{a'_i},\,\gamma_{a_{i+1}},\,\ldots,\gamma_{a_m}).   
\end{align*}
Since $\odot_f\in\mathcal{Q}_m\left(\Omega_n(X)\right)$, it must be $\gamma_{a_i}=\gamma_{a'_i}$ and thus, $a_i=a'_i$. Hence, $f\in\mathcal{Q}_m(X)$.
\end{proof}

\vspace{0.15cm}

Next, we show a necessary and sufficient condition for ensuring
that the product $\odot_f$ of two $m$-ary quasigroups is also an $m$-ary quasigroup. This condition generalizes a previous one described in \cite[Lemma 12]{Falcon2023} for the bidimensional case ($m=2$).

\begin{lemma}\label{lemma_quasigroups} Let $f\in\mathcal{Q}_m(X)$ and $g_1,\ldots,g_m\in\mathcal{Q}_n(X)$. Then, $\odot_f(g_1,\ldots,g_m)\in \mathcal{Q}_n(X)$ if and only if, for each $(a_1,\ldots,a_{n-1})\in X^{n-1}$ and each positive integer $i\leq n$, the set
{\small \[\left\{\left(g_1(a_1,\ldots,a_{i-1},x,a_i,a_{i+1},\ldots,a_{n-1}),\ldots,g_m(a_1,\ldots,a_{i-1},x,a_i,a_{i+1},\ldots,a_{n-1})\right)\colon\,x\in X\right\}\]}
is a Latin transversal in $(X,f)$.
\end{lemma}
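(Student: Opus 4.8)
The plan is to unwind the definition of $\mathcal{Q}_n(X)$ and match it, direction by direction, against the Latin transversal condition. Write $h:=\odot_f(g_1,\ldots,g_m)$. I fix a direction $i\in\{1,\ldots,n\}$ and a tuple $(a_1,\ldots,a_{n-1})\in X^{n-1}$, and abbreviate $\overline{a}_i(x):=(a_1,\ldots,a_{i-1},x,a_i,a_{i+1},\ldots,a_{n-1})$ for $x\in X$, so that $\overline{a}_i(x)$ runs over the $i$-th directional line through the fixed coordinates. By (\ref{eq_odot}),
\[h(\overline{a}_i(x))=f\big(g_1(\overline{a}_i(x)),\ldots,g_m(\overline{a}_i(x))\big),\]
and the displayed set in the statement is precisely $T:=\left\{\big(g_1(\overline{a}_i(x)),\ldots,g_m(\overline{a}_i(x))\big)\colon x\in X\right\}$, viewed as a collection of cells in the Cayley table of $f$. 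By definition, $h\in\mathcal{Q}_n(X)$ holds exactly when, for every $i$ and every such fixing, the unary map $x\mapsto h(\overline{a}_i(x))$ is a bijection of $X$; so it suffices to show that this bijectivity is equivalent to $T$ being a Latin transversal in $(X,f)$.

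The first key step is to observe that the \emph{positional} part of the transversal condition is automatic from the quasigroup hypotheses on the $g_j$, and does not involve $f$ at all. Indeed, since each $g_j\in\mathcal{Q}_n(X)$, its $i$-th directional map $x\mapsto g_j(\overline{a}_i(x))$ is a bijection of $X$. Hence for distinct $x,x'\in X$ the cells $\big(g_1(\overline{a}_i(x)),\ldots,g_m(\overline{a}_i(x))\big)$ and $\big(g_1(\overline{a}_i(x')),\ldots,g_m(\overline{a}_i(x'))\big)$ differ in \emph{every} coordinate. Consequently $T$ consists of exactly $|X|$ distinct cells, no two of which share a coordinate. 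Thus $T$ already meets every requirement in the definition of a Latin transversal except possibly the one on symbols.

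The second step records that this remaining requirement --- no two cells of $T$ carry the same symbol --- unwinds to the injectivity of $x\mapsto f\big(g_1(\overline{a}_i(x)),\ldots,g_m(\overline{a}_i(x))\big)=h(\overline{a}_i(x))$; since $X$ is finite, this injectivity is equivalent to bijectivity of the $i$-th directional map of $h$ through $\overline{a}_i$. Combining the two steps, for each fixed $i$ and fixing of the remaining coordinates, $T$ is a Latin transversal if and only if the corresponding directional map of $h$ is a bijection. Quantifying over all directions $i$ and all fixings then yields $\odot_f(g_1,\ldots,g_m)\in\mathcal{Q}_n(X)$ if and only if every such $T$ is a Latin transversal, as claimed.

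I expect no serious technical obstacle here; the work is essentially bookkeeping in the insertion of $x$ into slot $i$. The one point requiring care --- and the crux of the argument --- is the realization that the quasigroup hypotheses on $g_1,\ldots,g_m$ render the coordinate-distinctness part of the Latin transversal condition vacuous, so that the transversal condition collapses to exactly the symbol-distinctness that encodes one-directional bijectivity of $h$. One must also remember to invoke finiteness of $X$ to pass from injectivity of the symbol map to the bijectivity demanded by the definition of $\mathcal{Q}_n(X)$.
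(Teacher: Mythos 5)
Your proof is correct and follows essentially the same route as the paper's: the quasigroup hypotheses on $g_1,\ldots,g_m$ make the coordinate-distinctness part of the Latin transversal condition automatic, and the remaining symbol condition is exactly the directional bijectivity of $\odot_f(g_1,\ldots,g_m)$. You simply spell out in more detail (including the injectivity-versus-surjectivity bookkeeping on the finite set $X$) what the paper compresses into two sentences.
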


\begin{proof} Since $g_1,\ldots,g_m\in\mathcal{Q}_n(X)$, no two cells in the set of the statement coincide in any coordinate. Then, the result holds because $\odot_f(g_1,\ldots,g_m)\in \mathcal{Q}_n(X)$ if and only if
\[\left\{\odot_f\left(g_1,\ldots,g_m\right)(a_1,\ldots,a_{i-1},x,a_i,a_{i+1},\ldots,a_{n-1})\colon\,x\in X\right\}=X.\]
\hfill \,
\end{proof}

\vspace{0.15cm}

The quasigroup structure of $(X,f)$ makes the operator $\odot$ to preserve even more properties than those ones shown in the previous section. We delve into this aspect by focusing on the orthogonality condition. Thus, we assume from now on that $m=n$. For each subset $S=\{g_1,\ldots,g_m\}\subseteq \Omega_m(X)$, we define the set
{\small \begin{equation}\label{eq:ort}
\mathrm{Ort}(S):=\left\{g\in\Omega_m(X)\colon \left(S\setminus\{g_i\}\right) \cup\{g\} \text{ is a set of orthogonal operations } \forall i\leq m\right\}.
\end{equation}}

The next result particularizes Lemma \ref{lemma_permutation} in case of dealing with $m$-ary quasigroups on the set $X$ instead of the whole set $\Omega_m(X)$. It shows in particular that the number of $m$-ary quasigroups on $X$ coincides with the cardinality of $\mathrm{Ort}(S)$ for any set of orthogonal $m$-ary operations over $X$.

\begin{theorem} \label{theorem_orthogonalQ} Let $S=\{g_1,\ldots,g_m\}\subseteq \Omega_m(X)$ be a set of orthogonal $m$-ary operations. Then, the map
\begin{equation}\label{eq:map_ort}
\begin{array}{cccc}
& \mathcal{Q}_m(X) & \rightarrow & \mathrm{Ort}(S)\\
& f &\rightarrow& \odot_f(g_1,\ldots,g_m)
\end{array}
\end{equation}
is a bijection.
\end{theorem}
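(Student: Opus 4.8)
The plan is to show that the map in \eqref{eq:map_ort} is a well-defined bijection by combining Lemma \ref{lemma_permutation} (which already establishes that $f\mapsto\odot_f(g_1,\ldots,g_m)$ is a bijection $\Omega_m(X)\to\Omega_m(X)$) with Proposition \ref{proposition_quasigroup} (which says $f\in\mathcal{Q}_m(X)$ iff $\odot_f\in\mathcal{Q}_m(\Omega_m(X))$). The key conceptual step is to characterize the image of $\mathcal{Q}_m(X)$ under the big bijection as exactly $\mathrm{Ort}(S)$. So the heart of the argument is a single equivalence: for a fixed orthogonal set $S=\{g_1,\ldots,g_m\}$ and an arbitrary $f\in\Omega_m(X)$, establishing that
\[
f\in\mathcal{Q}_m(X)\quad\Longleftrightarrow\quad \odot_f(g_1,\ldots,g_m)\in\mathrm{Ort}(S).
\]
Once this is in hand, the theorem follows: restricting the bijection of Lemma \ref{lemma_permutation} to $\mathcal{Q}_m(X)$ gives a bijection onto its image, and the equivalence identifies that image with $\mathrm{Ort}(S)$.

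To prove the forward direction of the equivalence, I would suppose $f\in\mathcal{Q}_m(X)$ and set $h:=\odot_f(g_1,\ldots,g_m)$. I must check that for each $i\leq m$, the set $(S\setminus\{g_i\})\cup\{h\}$ is orthogonal, i.e.\ that the map $\overline{a}\mapsto(g_1(\overline a),\ldots,g_{i-1}(\overline a),h(\overline a),g_{i+1}(\overline a),\ldots,g_m(\overline a))$ is a bijection of $X^m$. Since $S$ itself is orthogonal, the map $T\colon\overline a\mapsto(g_1(\overline a),\ldots,g_m(\overline a))$ is a bijection. The new tuple is obtained from $T(\overline a)$ by applying, in coordinate $i$, the map sending $(c_1,\ldots,c_m)\mapsto f(c_1,\ldots,c_m)$ while keeping the other coordinates fixed. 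So I would factor the map in question as $T$ followed by the coordinate-$i$ substitution $(c_1,\ldots,c_m)\mapsto(c_1,\ldots,c_{i-1},f(c_1,\ldots,c_m),c_{i+1},\ldots,c_m)$; this substitution is a bijection of $X^m$ precisely because the $i$-th quasigroup law for $f$ says that fixing the other $m-1$ arguments makes $f$ a bijection in the $i$-th slot. Composing two bijections yields a bijection, giving orthogonality for each $i$.

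For the converse, I would argue contrapositively: if $f\notin\mathcal{Q}_m(X)$, then for some $i$ the map $c_i\mapsto f(c_1,\ldots,c_m)$ (other coordinates fixed) fails to be a bijection, so the coordinate-$i$ substitution above is not injective on $X^m$; composing with the bijection $T$ shows the corresponding augmented map is not a bijection, hence $(S\setminus\{g_i\})\cup\{h\}$ is not orthogonal and $h\notin\mathrm{Ort}(S)$. The cardinality statement in the remark preceding the theorem then drops out immediately, since a bijection equates $|\mathcal{Q}_m(X)|$ with $|\mathrm{Ort}(S)|$. I expect the main obstacle to be purely bookkeeping: phrasing the factorization through $T$ cleanly so that the coordinate-$i$ substitution's bijectivity is transparently equivalent to the $i$-th quasigroup condition on $f$, uniformly over all $i$, and being careful that orthogonality of $(S\setminus\{g_i\})\cup\{h\}$ is being tested with $h$ placed back in slot $i$ (not appended), so the factorization genuinely commutes. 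No deep difficulty is anticipated beyond this alignment of indices.
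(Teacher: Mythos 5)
Your proof is correct, but it is organized differently from the paper's. The paper splits the work into three separate verifications: well-definedness (for $f\in\mathcal{Q}_m(X)$ it checks by hand that the augmented map $\overline{a}\mapsto(\odot_f(g_1,\ldots,g_m)(\overline{a}),g_2(\overline{a}),\ldots,g_m(\overline{a}))$ is injective and onto, using the quasigroup laws for $f$ together with orthogonality of $S$), injectivity via Lemma \ref{lemma_permutation}, and then a separate surjectivity argument that starts from $h\in\mathrm{Ort}(S)$, defines $f$ by $\odot_f(g_1,\ldots,g_m)=h$, and verifies each quasigroup law for $f$ directly. You instead isolate the single equivalence $f\in\mathcal{Q}_m(X)\Leftrightarrow\odot_f(g_1,\ldots,g_m)\in\mathrm{Ort}(S)$ and prove it structurally, by factoring the $i$-th augmented map as $\sigma_i\circ T$, where $T(\overline{a})=(g_1(\overline{a}),\ldots,g_m(\overline{a}))$ is the bijection furnished by orthogonality of $S$ and $\sigma_i(c_1,\ldots,c_m)=(c_1,\ldots,c_{i-1},f(c_1,\ldots,c_m),c_{i+1},\ldots,c_m)$ is a bijection precisely when $f$ satisfies the $i$-th quasigroup law; this single factorization delivers both well-definedness and surjectivity at once, and makes visible the exact correspondence between the $i$-th quasigroup condition and the $i$-th replacement in the definition of $\mathrm{Ort}(S)$, which the paper's element-chasing obscures. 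The only blemish is in your contrapositive step, where you say the substitution ``is not injective'': if $X$ were infinite, $\sigma_i$ could fail to be a bijection by failing surjectivity instead; your actual conclusion (that $\sigma_i\circ T$ is not a bijection, hence the replaced set is not orthogonal) is what matters and is correct either way, so this is a wording slip rather than a gap.
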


\begin{proof}
The map (\ref{eq:map_ort}) is well-defined. To see it, assume that $f\in\Omega_m(X)$ and let us prove that  $\odot_f(g_1,\ldots,g_m)\in \mathrm{Ort}(S)$. To this end, we shall see that $\{\odot_f(g_1,\ldots,g_m),$ $g_2,\ldots,g_m\}$ is a set of orthogonal $m$-ary products. (The remaining cases in the definition (\ref{eq:ort}) follow similarly.) So, we have to prove that the map
\begin{equation}\label{eq:map_orthogonalQ}
\begin{array}{cccc}
& X^m & \to & X^m\\
& \overline{a} &\mapsto & \left(\odot_f(g_1,\ldots,g_m)(\overline{a}),g_2(\overline{a}),\ldots,g_m(\overline{a})\right)
\end{array}
\end{equation}
is a bijection. First, we prove that it is injective. To this end, let $\overline{a},\overline{b}\in X^m$ be such that
\[\left(\odot_f(g_1,\ldots,g_m)(\overline{a}),g_2(\overline{a}),\ldots,g_m(\overline{a})\right)=\left(\odot_f(g_1,\ldots,g_m)(\overline{b}),g_2(\overline{b}),\ldots,g_m(\overline{b})\right).\]
Then, $g_i(\overline{a})=g_i(\overline{b})$ for all $i\in\{2,\ldots,m\}$, and
\[f\left(g_1(\overline{a}),\ldots,g_m(\overline{a})\right)=f\left(g_1(\overline{b}),\ldots,g_m(\overline{b})\right).\]
Since $f\in\mathcal{Q}_m(X)$, it must be $g_1(\overline{a})=g_1(\overline{b})$. But then, $\overline{a}=\overline{b}$ because $S$ is a set of orthogonal $m$-ary operations. Hence, the map (\ref{eq:map_orthogonalQ}) is injective. 

Second, we prove that the map (\ref{eq:map_orthogonalQ}) is onto. To this end, let $\overline{a}:=(a_1,\ldots,a_m)\in X^m$. Since $f\in\mathcal{Q}_m(X)$, there exists an element $x\in X$ such that $f(x,a_2,\ldots,a_m)=a_1$. Moreover, since $S$ is a set of orthogonal $m$-ary operations, there exists an element $\overline{b}\in X^m$ such that $g_1(\overline{b})=x$ and $g_i(\overline{b})=a_i$ for all $i\in\{2,\ldots,m\}$. Hence, $\left(\odot_f(g_1,\ldots,g_m)(\overline{b}),g_2(\overline{b}),\ldots,g_m(\overline{b})\right)=\overline{a}$. Thus, the map (\ref{eq:map_orthogonalQ}) is onto and hence, the map (\ref{eq:map_ort}) is well-defined.

Further, the injectivity of the map (\ref{eq:map_ort}) follows from Lemma \ref{lemma_permutation}. It therefore remains to prove that this map is also onto. To this end, let $h\in \mathrm{Ort}(S)$. Then, let $f\in \Omega_m(X)$ be defined so that
\[\odot_f(g_1,\ldots,g_m)(\overline{a})=h(\overline{a})\]
for all $\overline{a}\in X^m$. (We have already seen in the proof of Lemma \ref{lemma_permutation} that this $m$-ary operation is well-defined.)  It remains to show that $f\in\mathcal{Q}_m(X)$. That is, we have to prove that, for every $(a_1,\ldots,a_{i-1},a_{i+1},...,a_m)\in X^{m-1}$ and every $b\in X$, there exists a unique $a_i\in X$ such that $f(a_1,\dots,a_{i-1},a_i,a_{i+1},\dots,a_m)=b$. Without loss of generality, we prove this condition for $i=1$. In this regard, let $(a_2,\ldots,a_m)\in X^{m-1}$ and $b\in X$. Since $\{h,g_2,\ldots,g_m\}$ is a set of orthogonal $m$-ary operations, there exists a unique element $\overline{c}\in X^m$ such that
\[\left(h(\overline{c}),g_2(\overline{c}),\ldots,g_m(\overline{c}))\right)=\left(b,a_2,\ldots,a_m\right).\]
Then, the element $g_1(\overline{c})\in X$ satisfies that
\[f(g_1(\overline{c}),a_2,\ldots,a_m)=\odot_f(g_1,\ldots,g_m)(\overline{c})=h(\overline{c})=b.\]
In order to prove that $f\in\mathcal{Q}_m(X)$, we have to prove that $g_1(\overline{c})$ is the unique element in $X$ satisfying the previous condition. To this end, let 
\[\overline{a}:=(a_1,a_2,\ldots,a_m)\in X^m\]
and 
\[\overline{a}':=(a'_1,a_2\ldots,a_m)\in X^m,\]
be such that $f(\overline{a})=f(\overline{a}')=b$. Since $\{g_1,\ldots,g_m\}$ is a set of orthogonal $m$-ary operations, there exist two elements $\overline{b},\overline{c}\in X^m$ such that $(g_1(\overline{b}),\ldots,g_m(\overline{b}))=\overline{a}$ and $(g_1(\overline{c}),\ldots,g_m(\overline{c}))=\overline{a}'$. Then,
\[h(\overline{b})=\odot_f(g_1,\ldots,g_m)(\overline{b})=f(\overline{a})=f(\overline{a}')=\odot_f(g_1,\ldots,g_m)(\overline{c})=h(\overline{c}).\]
As a consequence,
{\small \[ (h(\overline{b}),g_2(\overline{b}),\ldots,g_m(\overline{b}))=(h(\overline{c}),g_2(\overline{c}),\ldots,g_m(\overline{c})).\]}
Since $h\in\mathrm{Ort}(S)$, we have that $\overline{b}=\overline{c}$ and hence, $\overline{a}=\overline{a}'$. Therefore, $f\in\mathcal{Q}_m(X)$.
\end{proof}

\vspace{0.25cm}

As is the custom in the theory of Latin squares, let us call two binary operations $g_1,g_2\in\Omega_2(X)$ \emph{orthogonal} if $\{g_1,g_2\}$ is an orthogonal set. The following corollary follows straightforwardly from Theorem \ref{theorem_orthogonalQ}. In spite of its relevance, it seems not to have been noticed in the literature.

\begin{corollary}\label{corollary_orthogonalQ} 
Let $g_1,g_2\in\Omega_2(X)$ be orthogonal binary operations on $X$. Then there is a one-to-one correspondence between all quasigroup operations on $X$ and all binary operations on $X$ that are orthogonal to both $g_1$ and $g_2$.
\end{corollary}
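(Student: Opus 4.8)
The plan is to apply Theorem~\ref{theorem_orthogonalQ} directly in the special case $m=2$, so that the only real work is to unwind the definition of $\mathrm{Ort}(S)$ in this low-dimensional setting and match the two descriptions of the sets involved. Taking $S=\{g_1,g_2\}$, the theorem already supplies a bijection from $\mathcal{Q}_2(X)$ onto $\mathrm{Ort}(S)$ (namely $f\mapsto\odot_f(g_1,g_2)$, which for $m=2$ is precisely the Hadamard quasigroup product of \eqref{eq_odot_H}). Thus it suffices to verify that each of these two sets coincides with the set named in the statement.

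First I would note that the domain requires no argument: $\mathcal{Q}_2(X)$ is by definition the set of all binary operations on $X$ whose Cayley table is a Latin square, i.e. exactly what the statement calls ``all quasigroup operations on $X$.'' Next I would unwind $\mathrm{Ort}(S)$ from \eqref{eq:ort} for $m=2$. Here the index $i$ ranges only over $\{1,2\}$, so a binary operation $g$ lies in $\mathrm{Ort}(S)$ precisely when both $(S\setminus\{g_1\})\cup\{g\}=\{g_2,g\}$ and $(S\setminus\{g_2\})\cup\{g\}=\{g_1,g\}$ are orthogonal sets. In the terminology introduced immediately before the corollary, this says exactly that $g$ is orthogonal to $g_2$ and that $g$ is orthogonal to $g_1$; equivalently, $g$ is orthogonal to both $g_1$ and $g_2$. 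Hence $\mathrm{Ort}(S)$ is exactly the set of binary operations on $X$ that are orthogonal to both $g_1$ and $g_2$, matching the intended codomain.

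Combining these two identifications with the bijection of Theorem~\ref{theorem_orthogonalQ} yields the claimed one-to-one correspondence. There is no genuine obstacle: the entire content sits in the theorem, and the step from it to the corollary is purely a matter of specializing $m=2$ and reading off the definition of $\mathrm{Ort}(S)$. The only point worth a word of care is confirming that the two orthogonality conditions packaged in \eqref{eq:ort} collapse, for $m=2$, into the single symmetric requirement ``orthogonal to both $g_1$ and $g_2$,'' which the bookkeeping above makes transparent.
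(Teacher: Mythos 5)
Your proposal is correct and follows the paper's own route: the paper simply observes that the corollary follows straightforwardly from Theorem \ref{theorem_orthogonalQ}, and your careful unwinding of $\mathrm{Ort}(\{g_1,g_2\})$ for $m=2$ is exactly the specialization that makes that observation precise.
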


\vspace{0.25cm}

Note that we do not claim that for any two orthogonal Latin squares $L_1$, $L_2$ the number of Latin squares that are orthogonal to both $L_1$ and $L_2$ is independent of these two Latin squares. Indeed, Proposition 4.3 yields all Latin squares orthogonal to both $L_1$ and $L_2$, but it also yields other non-Latin operations orthogonal to both $L_1$ and $L_2$.

\section{A pair of examples}

Let us finish our study with a pair of examples. The first one illustrates some of the results appearing in the previous sections, whereas the second one gives rise to new open questions to deal with.

\begin{example}\label{example1} Let $X=\{1,2\}$. The set $\Omega_2(X)$ is formed by the $16$ binary operations having the following Cayley tables. 

{\scriptsize \[\begin{array}{ccccccccc}
\begin{array}{c}\begin{array} {|c|c|} \hline
   1  &  1\\ \hline
   1  & 1 \\ \hline
\end{array}\\ g_1\end{array} & \begin{array}{c}\begin{array} {|c|c|} \hline
   1  &  1\\ \hline
   1  & 2 \\ \hline
\end{array}\\ g_2\end{array} & \begin{array}{c} \begin{array} {|c|c|} \hline
   1  &  1\\ \hline
   2  & 1 \\ \hline
\end{array}\\g_3\end{array}&  \begin{array}{c}\begin{array} {|c|c|} \hline
   1  &  1\\ \hline
   2  & 2 \\ \hline
\end{array}\\g_4\end{array} & \begin{array}{c}\begin{array} {|c|c|} \hline
   1  &  2\\ \hline
   1  & 1 \\ \hline
\end{array}\\ g_5\end{array} & \begin{array}{c}\begin{array} {|c|c|} \hline
   1  &  2\\ \hline
   1  & 2 \\ \hline
\end{array}\\ g_6\end{array} & \begin{array}{c}\begin{array} {|c|c|} \hline
   1  &  2\\ \hline
   2  & 1 \\ \hline
\end{array}\\ g_7\end{array} & \begin{array}{c}\begin{array} {|c|c|} \hline
   1  &  2\\ \hline
   2  & 2 \\ \hline
\end{array}\\ g_8\end{array}
\end{array}\]
\[\begin{array}{ccccccccc}
\begin{array}{c}\begin{array} {|c|c|} \hline
   2  &  1\\ \hline
   1  & 1 \\ \hline
\end{array}\\ g_9\end{array} & \begin{array}{c}\begin{array} {|c|c|} \hline
   2  &  1\\ \hline
   1  & 2 \\ \hline
\end{array}\\ g_{10}\end{array} & \begin{array}{c} \begin{array} {|c|c|} \hline
   2  &  1\\ \hline
   2  & 1 \\ \hline
\end{array}\\g_{11}\end{array}&  \begin{array}{c}\begin{array} {|c|c|} \hline
   2  &  1\\ \hline
   2  & 2 \\ \hline
\end{array}\\g_{12}\end{array} & \begin{array}{c}\begin{array} {|c|c|} \hline
   2  &  2\\ \hline
   1  & 1 \\ \hline
\end{array}\\ g_{13}\end{array} & \begin{array}{c}\begin{array} {|c|c|} \hline
   2  &  2\\ \hline
   1  & 2 \\ \hline
\end{array}\\ g_{14}\end{array} & \begin{array}{c}\begin{array} {|c|c|} \hline
   2  &  2\\ \hline
   2  & 1 \\ \hline
\end{array}\\ g_{15}\end{array} & \begin{array}{c}\begin{array} {|c|c|} \hline
   2  &  2\\ \hline
   2  & 2 \\ \hline
\end{array}\\ g_{16}\end{array}
\end{array}\]}

In particular, $\mathcal{Q}_2(X)=\{g_7,\,g_{10}\}$. The next array constitutes the Cayley table of the binary groupoid $(\Omega_2(X),\odot_{g_7})$.

{\small \[\begin{array}{|c|c|c|c|c|c|c|c|c|c|c|c|c|c|c|c|} \hline
g_1 & g_2 & g_3 & g_4 & g_5 & g_6 & g_7 & g_8 & g_9 & g_{10}& g_{11} & g_{12} & g_{13} & g_{14} & g_{15} & g_{16}\\ \hline 
g_2&g_1&g_4&g_3&g_6&g_5&g_8&g_7&g_{10}&g_9&g_{12}&g_{11}&g_{14}&g_{13}&g_{16}&g_{15}\\ \hline
g_3&g_4&g_1&g_2&g_7&g_8&g_5&g_6&g_{11}&g_{12}&g_9&g_{10}&g_{15}&g_{16}&g_{13}&g_{14}\\ \hline
g_4&g_3&g_2&g_1&g_8&g_7&g_6&g_5&g_{12}&g_{11}&g_{10}&g_9&g_{16}&g_{15}&g_{14}&g_{13}\\ \hline
g_5&g_6&g_7&g_8&g_1&g_2&g_3&g_4&g_{13}&g_{14}&g_{15}&g_{16}&g_9&g_{10}&g_{11}&g_{12}\\ \hline
g_6&g_5&g_8&g_7&g_2&g_1&g_4&g_3&g_{14}&g_{13}&g_{16}&g_{15}&g_{10}&g_9&g_{12}&g_{11}\\ \hline
g_7&g_8&g_5&g_6&g_3&g_4&g_1&g_2&g_{15}&g_{16}&g_{13}&g_{14}&g_{11}&g_{12}&g_9&g_{10}\\ \hline
g_8&g_7&g_6&g_5&g_4&g_3&g_2&g_1&g_{16}&g_{15}&g_{14}&g_{13}&g_{12}&g_{11}&g_{10}&g_9\\ \hline
g_9&g_{10}&g_{11}&g_{12}&g_{13}&g_{14}&g_{15}&g_{16}&g_1&g_2&g_3&g_4&g_5&g_6&g_7&g_8\\ \hline
g_{10}&g_9&g_{12}&g_{11}&g_{14}&g_{13}&g_{16}&g_{15}&g_2&g_1&g_4&g_3&g_6&g_5&g_8&g_7\\ \hline
g_{11}&g_{12}&g_9&g_{10}&g_{15}&g_{16}&g_{13}&g_{14}&g_3&g_4&g_1&g_2&g_7&g_8&g_5&g_6\\ \hline
g_{12}&g_{11}&g_{10}&g_9&g_{16}&g_{15}&g_{14}&g_{13}&g_4&g_3&g_2&g_1&g_8&g_7&g_6&g_5\\ \hline
g_{13}&g_{14}&g_{15}&g_{16}&g_9&g_{10}&g_{11}&g_{12}&g_5&g_6&g_7&g_8&g_1&g_2&g_3&g_4\\ \hline
g_{14}&g_{13}&g_{16}&g_{15}&g_{10}&g_9&g_{12}&g_{11}&g_6&g_5&g_8&g_7&g_2&g_1&g_4&g_3\\ \hline
g_{15}&g_{16}&g_{13}&g_{14}&g_{11}&g_{12}&g_9&g_{10}&g_7&g_8&g_5&g_6&g_3&g_4&g_1&g_2\\ \hline
g_{16}&g_{15}&g_{14}&g_{13}&g_{12}&g_{11}&g_{10}&g_9&g_8&g_7&g_6&g_5&g_4&g_3&g_2&g_1\\ \hline
\end{array}\]}

That is, $(X,g_7)$ is isomorphic to the abelian group $(\mathbb{Z}_2,+)$, whereas $(\Omega_2(X),\odot_{g_7})$ is isomorphic to the abelian group $(\mathbb{Z}_2\times \mathbb{Z}_2\times \mathbb{Z}_2\times \mathbb{Z}_2,+)$. That is, associativity and commutativity are preserved, as Lemma \ref{lemma_identities} indicates. Moreover, the identity element $1$ in $(X,g_7)$ maps to the identity element $g_1=\gamma_1$ in $(\Omega_2(X),\odot_{g_7})$, as Proposition \ref{proposition:LI} shows. By abuse of notation, we could write $\odot_{\mathbb{Z}_2}=\mathbb{Z}_2\times \mathbb{Z}_2\times \mathbb{Z}_2\times \mathbb{Z}_2$. 

\hfill $\lhd$
\end{example}

\vspace{0.25cm}

Note in the previous example that the groupoid $(X,g_7)$ is embedded into $(\Omega_2(X),$ $\odot_{g_7})$ many more times than the way in which Lemma \ref{lemma_embedding} indicates. The next example enables us to delve into this aspect by considering isomorphic embeddings, which gives rise in turn to the open question about how many embeddings actually exist from an $m$-ary groupoid $(X,f)$ to the $m$-ary groupoid $(\Omega_m(X),\odot)$.

\begin{example}\label{example2} Let $X=\{1,2,3,4,5\}$ and $\star,\circ,*\in \Omega_2(X)$, with respective Cayley tables
\[A_\star\equiv\begin{array}{|c|c|c|c|c|} \hline
1 & 5 & 4 & 3 & 2\\ \hline
3 & 2 & 1 & 5 & 4\\ \hline
5 & 4 & 3 & 2 & 1\\ \hline
2 & 1 & 5 & 4 & 3\\ \hline
4 & 3 & 2 & 1 & 5\\ \hline
\end{array} \hspace{1cm} A_\circ\equiv\begin{array}{|c|c|c|c|c|} \hline
1 & 2 & 3 & 4 & 5\\ \hline
1 & 2 & 3 & 4 & 5\\ \hline
1 & 2 & 3 & 4 & 5\\ \hline
1 & 2 & 3 & 4 & 5\\ \hline
1 & 2 & 3 & 4 & 5\\ \hline
\end{array} \hspace{1cm} A_*\equiv\begin{array}{|c|c|c|c|c|} \hline
1 & 4 & 2 & 5 & 3\\ \hline
4 & 2 & 5 & 3 & 1\\ \hline
2 & 5 & 3 & 1 & 4\\ \hline
5 & 3 & 1 & 4 & 2\\ \hline
3 & 1 & 4 & 2 & 5\\ \hline
\end{array}.
\]
In addition, let $\star^t,\circ^t \in \Omega_2(X)$ be the binary operations on $X$ whose Cayley tables are respectively the transposes of $A_\star$ and $A_\circ$. In particular, $\{\star,\,\star^t\}$ is a set of orthogonal binary operations. (That is, $f$ is {\em self-orthogonal}.) Then, the binary operation $\odot_\star$ applied to the set $\{\star,\star^t,\circ,\circ^t,*\}$ gives rise to the following Cayley table.

\[\begin{array}{c||c|c|c|c|c|c|}
\odot_\star & \star & \star^t & \circ & \circ^t & *\\ \hline \hline 
\star & \star & \circ & \star^t & * & \circ^t\\ \hline
\star^t & \circ^t & \star^t & * & \star & \circ\\ \hline
\circ & * & \circ^t & \circ & \star^t & \star\\ \hline
\circ^t & \circ & * & \star & \circ^t & \star^t\\ \hline
* & \star^t & \star & \circ^t & \circ & *\\ \hline
\end{array} \hspace{1cm} \equiv \hspace{1cm} \begin{array}{c||c|c|c|c|c|c|}
\bullet & 1 & 2 & 3 & 4 & 5\\ \hline \hline 
1 & 1 & 3 & 2 & 5 & 4\\ \hline
2 & 4 & 2 & 5 & 1 & 3\\ \hline
3 & 5 & 4 & 3 & 2 & 1\\ \hline
4 & 3 & 5 & 1 & 4 & 2\\ \hline
5 & 2 & 1 & 4 & 3 & 5\\ \hline
\end{array}
\]

\noindent Here, $(X,\bullet)$ is obtained after replacing, respectively, the symbols $\star$, $\star^t$, $\circ$, $\circ^t$ and $*$ by the symbols $1$, $2$, $3$, $4$ and $5$. Note that $(X,\star)$ and $(X,\bullet)$ are isomorphic by means of the isomorphism defined by the permutation $(235)\in\mathcal{S}_5$. \hfill $\lhd$
\end{example}

\vspace{0.25cm}

\subsection*{Acknowledgments}

Falc\'on's work is partially supported by  the project TED2021-130566B-100 from Ministry of Science and Innovation of the Government of Spain. Mella’s work is partially supported by INdAM - GNSAGA, and by the University of Modena and Reggio Emilia (FAR2023).
The research was also supported by the Italian Ministry of University and Research through the project named “Fondi per attivit\`a a carattere internazionale” 2022 INTER DICATAM PASOTTI.  P. Vojt\v{e}chovsk\'{y} supported by the Simons Foundation Mathematics and Physical Sciences Collaboration Grant for Mathematicians no. 855097, and by the Enhanced Sabbatical Program of the University of Denver.

\end{document}